\documentclass[9pt]{amsart}

\usepackage{amsmath,amsfonts,amsthm}
\usepackage{color}
\usepackage[]{graphics}
\usepackage{verbatim}
\usepackage{eepic,epic}
\usepackage{epsfig,subfigure,epstopdf}
\usepackage[colorlinks,linktocpage,linkcolor=blue]{hyperref}


\hsize=6.4in

\newtheorem{theorem}{Theorem}[section]
\newtheorem{lemma}{Lemma}[section]

\newtheorem{example}{Example}[section]

\newtheorem{remark}{Remark}[section]

\numberwithin{equation}{section}

\begin{document}

\newcommand{\al}{\alpha}
\newcommand{\fy}{\varphi}
\newcommand{\la}{\lambda}
\newcommand{\ep}{\epsilon}
\newcommand{\rh}{\varrho}
\newcommand{\vth}{\vartheta}
\newcommand{\vtht}{{\widetilde \vartheta}}
\newcommand{\rlh}{{\widetilde \varrho}}

\def\tribar{\vert\thickspace\!\!\vert\thickspace\!\!\vert}
\def\Etilh{{\bar{E}_h}}
\def\PD{P(\partial_t)}

\def\dH#1{\dot H^{#1}(\Omega)}
\def\normh#1#2{\tribar #1 \tribar_{\dot H^{#2}(\Omega)}}
\def\vecal{{\vec{\al}}}
\def\pa{\partial}
\def\Dal{{\partial_t^\al}}
\def\dal{\bar \partial}
\def\Om{\Omega}
\def\II{(\Om)}
\def\hv{\widehat w_h}
\def\G{\Gamma}
\def\bPtau{\bar\partial_\tau}
\def\pl{\mathrm{Li}}
\newcount\icount
\def\DDelta{{T}}
\def\hy{\widehat y}

\def\DD#1#2{\icount=#1
  \ifnum\icount<1
  \,_{ 0}\kern -.1em D^{#2}_{\kern -.1em x}
  \else
  \,_{x}\kern -.2em D^{#2}_1
  \fi
}

\def\DDRI#1#2{\icount=#1
  \ifnum\icount<1
  \,_{-\infty}^{\kern 1em R}\kern -.2em D^{#2}_{\kern -.1em x}
  \else
  \,_{x}^R \kern -.2em D^{#2}_\infty
  \fi
}

\def\DDR#1#2{\icount=#1
  \ifnum\icount<1
 _{0}^{ \kern -.1em R} \kern -.2em D^{#2}_{\kern -.1em x}
  \else
 _{x}^{ \kern -.1em R} \kern -.2em D^{#2}_{\kern -.1em 1}
  \fi
}

\def\DDCI#1#2{\icount=#1
  \ifnum\icount<1
  \,_{-\infty}^{\kern 1em C}  \kern -.2em D^{#2}_{\kern -.1em x}
  \else
  \,_{x}^C \kern -.2em  D^{#2}_\infty
  \fi
}

\def\DDC#1#2{\icount=#1
  \ifnum\icount<1
  \,_{0}^C \kern -.2em  D^{#2}_{\kern -.1em x}
  \else
  \,_{x}^C \kern -.2em D^{#2}_1
  \fi
}

\def\Hd#1{\widetilde H^{#1}(\Omega)}

\def\Hdi#1#2{\icount=#1
  \ifnum\icount<1
  \widetilde H_{L}^{#2}\II
  \else
  \widetilde H_{R}^{#2}\II
  \fi
}

\def\Cd{\widetilde C_}

\title[L1 Scheme for Time-Fractional Diffusion]
{An analysis of the L1 Scheme for the subdiffusion equation with nonsmooth data}
\author {Bangti Jin \and Raytcho Lazarov \and Zhi Zhou}
\address{Department of Computer Science, University College London, Gower Street,
London WC1E 6BT, UK ({bangti.jin@gmail.com})}
\address{Department of Mathematics, Texas A\&M University, College Station,
TX 77843-3368, USA \\
(lazarov@math.tamu.edu, zzhou@math.tamu.edu)}
\date{started July, 2014; today is \today}

\begin{abstract}
{The subdiffusion equation with a Caputo fractional derivative of order $\alpha\in(0,1)$ in time arises in
a wide variety of practical applications, and it is often adopted to model anomalous subdiffusion processes in
heterogeneous media. The L1 scheme is one of the most popular and successful numerical methods for discretizing
the Caputo fractional derivative in time. The scheme was analyzed earlier independently by Lin and Xu (2007)
and Sun and Wu (2006), and an $O(\tau^{2-\alpha})$ convergence rate was established, under the assumption
that the solution is twice continuously differentiable in time. However, in view of the smoothing property
of the subdiffusion equation, this regularity condition is restrictive, since it does not hold even for the
homogeneous problem with a smooth initial data. In this work, we revisit the error analysis of the scheme,
and establish an $O(\tau)$ convergence rate for both smooth and nonsmooth initial data. The analysis is valid
for more general sectorial operators. In particular, the L1 scheme is applied to one-dimensional space-time
fractional diffusion equations, which involves also a Riemann-Liouville derivative of order $\beta\in(3/2,2)$
in space, and error estimates are provided for the fully discrete scheme. Numerical experiments are provided
to verify the sharpness of the error estimates, and robustness of the scheme with respect to data regularity.
}\\
\textbf{Keywords}: fractional diffusion, L1 scheme, error estimates, space-time fractional diffusion
\end{abstract}

\maketitle

\section{Introduction}\label{sec:intro}
We consider the model initial--boundary value problem for the following fractional order parabolic
differential equation for $u(x,t)$:
\begin{alignat}{3}\label{eqn:fde}
   \Dal u-\Delta u&= f,&&\quad \text{in  } \Omega&&\quad T \ge t > 0,\notag\\
   u&=0,&&\quad\text{on}\  \partial\Omega&&\quad T \ge t > 0,\\
    u(0)&=v,&&\quad\text{in  }\Omega,&&\notag
\end{alignat}
where $\Omega$ is a bounded convex polygonal domain in $\mathbb R^d\,(d=1,2,3)$ with a boundary
$\partial\Omega$, $v$ is a given function on $\Om$, and $T>0$ is a fixed value.
Here  $\Dal u$ ($0<\al<1$) denotes the left-sided Caputo fractional derivative of order
$\al$ with respect to $t$ and it is defined by
(see, e.g. \cite[pp.\,91]{KilbasSrivastavaTrujillo:2006})
\begin{equation}\label{McT}
   \Dal u(t)= \frac{1}{\Gamma(1-\al)} \int_0^t(t-s)^{-\al}u'(s)\, ds,
\end{equation}
where $\Gamma(\cdot)$ is Euler's Gamma function defined by $\Gamma(x)=\int_0^\infty t^{x-1}e^{-t}dt$.

The model \eqref{eqn:fde} is known to capture well the dynamics of subdiffusion processes, in
which the mean square variance grows at a rate slower than that in a Gaussian process \cite{BouGeo},
and has found a number of applications. For example, subdiffusion has been successfully
used to describe thermal diffusion in media with fractal geometry \cite{Nigmatulin:1986}, highly
heterogeneous aquifer \cite{AdamsGelhar:1992} and underground environmental problem \cite{HatanoHatano:1998}.
At a microscopic level, the particle motion can be described by a continuous time random walk,
in which the waiting time of the particle motion follows a heavy tailed distribution, as opposed
to a Gaussian process, which is characteristic of the normal diffusion equation. The macroscopic
counterpart is a diffusion equation with a Caputo fractional derivative in time, i.e., \eqref{eqn:fde}.

The derivation and study of accurate numerical methods for the model \eqref{eqn:fde}
with provable (possibly optimal-order) error bounds have attracted
considerable interest in recent years, and a number of efficient numerical schemes, notably the finite
difference method, have been developed. There are two predominant approximations: the L1 type
approximation and Gr\"{u}nwald-Letnikov type approximation. The former is essentially of finite
difference nature, whereas the latter is often based on convolution quadrature. We refer
interested readers to \cite[Section 1 and Table 1]{JinLazarovZhou:2014a} for an updated overview of these numerical
methods and relevant convergence rate results.

In this paper, for reasons to be explained below,
we revisit the L1 scheme, which was independently developed and analyzed in \cite{SunWu:2006} and
\cite{LinXu:2007}. To this end, we divide the interval $[0,
T]$ into a uniform grid with a time step size $\tau = T/N$, $N\in\mathbb{N}$, so that $0=t_0<t_1<\ldots<t_N=T$, and $t_n=n
\tau$, $n=0,\ldots,N$. The L1 approximation of the Caputo fractional derivative $\Dal u(x,t_n)$ is given
by \cite[Section 3]{LinXu:2007}
 \begin{equation}\label{eqn:$L^1$approx}
   \begin{aligned}
     \Dal u(x,t_n) &= \frac{1}{\Gamma(1-\al)}\sum^{n-1}_{j=0}\int^{t_{j+1}}_{t_j}
        \frac{\partial u(x,s)}{\partial s} (t_n-s)^{-\al}\, ds \\
     &\approx \frac{1}{\Gamma(1-\al)}\sum^{n-1}_{j=0} \frac{u(x,t_{j+1})-u(x,t_j)}{\tau}\int_{t_j}^{t_{j+1}}(t_n-s)^{-\al}ds\\
     &=\sum_{j=0}^{n-1}b_j\frac{u(x,t_{n-j})-u(x,t_{n-j-1})}{\tau^\alpha}\\
     &=\tau^{-\al} [b_0u(x,t_n)-b_{n-1}u(x,t_0)+\sum_{j=1}^{n-1}(b_j-b_{j-1})u(x,t_{n-j})] =:L_1^n(u).
   \end{aligned}
 \end{equation}
where the weights $b_j$ are given by
\begin{equation*}
b_j=((j+1)^{1-\alpha}-j^{1-\alpha})/\Gamma(2-\al),\ j=0,1,\ldots,N-1.
\end{equation*}
It was shown in \cite[equation (3.3)]{LinXu:2007} (see also \cite[Lemma 4.1]{SunWu:2006}) that the local truncation
error of the L1 approximation is bounded by $c\tau^{2-\al}$ for some constant $c$ depending only on $u$, provided
that the solution $u$ is twice continuously differentiable.
Further, upon discretizing the spatial derivative(s) using the finite difference method, finite element
method or spectral method, we arrive at a fully discrete scheme. Since its first appearance, the L1 scheme
has been extensively used in practice, and currently it is one of the most popular and successful numerical methods
for solving the time fractional diffusion equation \eqref{eqn:fde}, including the case of nonsmooth data
arising in inverse problems (see, e.g., \cite{JinRundell:2012}).

Note that the fractional diffusion operator has only very limited smoothing property, especially for $t$
close to zero. For example, for the homogeneous equation with an initial data
$v \in L^2(\Omega)$, we have the following stability estimate \cite[Theorem 2.1]{SakamotoYamamoto:2011}
\begin{equation*}
  \|\partial_t^\alpha u \|_{L^2\II}\leq ct^{-\alpha}\|v\|_{L^2\II}.
\end{equation*}
That is, the $\alpha$-th order  Caputo derivative is already unbounded, not to mention the high-order
derivatives. In view of the limited  regularity of the solution of \eqref{eqn:fde}, the high regularity
required in the convergence analysis in these useful works is restrictive.

Hence the $C^2$-regularity assumption generally does not hold for problem \eqref{eqn:fde}, and
the case of nonsmooth data is not covered by the existing error analysis. Our numerical
experiments indicate that the $O(\tau^{2-\alpha})$ convergence rate actually does not hold even for
smooth initial data $v$. This is clearly seen from the numerical results in Table \ref{tab:motiv}, where \texttt{rate}
denotes the empirical convergence rate, and the numbers in the bracket denote the theoretical
rates based on the local truncation error. These computational results show that the scheme is
only first-order accurate even when the initial data $v$ is smooth. This observation necessitates revisiting
the convergence analysis of the L1 scheme, especially for the case of nonsmooth problem data.

The goal of this paper is to fill the gap between the existing convergence theory and the numerical experiments, namely,
establishing of optimal error bounds that are expressed directly in terms of the regularity of the problem data.
For the standard parabolic equation, such bounds are well known (see, e.g. \cite[Chapter 7]{thomee2006galerkin}).
To the best of our knowledge, there is no such error analysis of the L1 scheme in the case of nonsmooth data.
Such theory and numerical experiments are provided in this work.

\begin{table}[htb!]
\caption{The $L^2$-norm of the error $\| U_h^n-u(t_n) \|_{L^2\II}$ for problem \eqref{eqn:fde}
with $f\equiv 0$ and (a) $v=\chi_{(0,1/2)}$ and (b) $v=x(1-x)$, at $t=0.1$, computed by the fully
discrete scheme \eqref{eqn:fully} with $h=2^{-12}$.}\label{tab:motiv}
\begin{center}\setlength{\tabcolsep}{7pt}
\vspace{-.3cm}{
     \begin{tabular}{|c|c|cccccc|c|}
     \hline
      $\alpha$ &  $N$  &$10$ &$20$ &$40$ & $80$ & $160$ &$320$ &rate \\
     \hline
     $0.1$ & (a)  &3.30e-4 &1.62e-4 &8.02e-5 &3.99e-5 &1.99e-5 &9.93e-6 &$\approx$ 1.01 (1.90)\\
             & (b)  &4.97e-4 &2.44e-4 &1.21e-4 &6.00e-5 &2.99e-5 &1.49e-5 &$\approx$ 1.00 (1.90)\\
      \hline
     $0.5$ & (a)  &3.04e-3 &1.44e-3 &6.96e-4 &3.41e-4 &1.68e-4 &8.35e-5 &$\approx$ 1.03 (1.50)\\
             & (b)  &4.61e-3 &2.18e-3 &1.05e-3 &5.16e-4 &2.54e-4 &1.34e-5 &$\approx$ 1.02 (1.50)\\
      \hline
     $0.9$ & (a)  &1.31e-2 &6.46e-3 &3.17e-3 &1.56e-3 &7.68e-4 &3.78e-4 &$\approx$ 1.02 (1.10)\\
             & (b)  &1.94e-2 &9.67e-3 &4.78e-3 &2.36e-3 &1.16e-3 &5.75e-4 &$\approx$ 1.02 (1.10)\\
      \hline
     \end{tabular}}
\end{center}
\end{table}

In Theorem \ref{thm:error_fully}, we present an optimal $O(\tau)$ convergence rate for the fully discrete
scheme based on the L1 scheme \eqref{eqn:$L^1$approx} in time and the Galerkin finite element method in space
for both smooth and nonsmooth data, i.e., $v\in L^2\II$ and $Av\in L^2\II$ ($A=-\Delta$ with a homogeneous Dirichlet boundary condition),
respectively. For example, for $v\in L^2\II$ and $U_h^0=P_hv$, for the fully discrete solution $U_h^n$, there holds
\begin{equation*}
   \| u(t_n)-U_h^n \|_{L^2(\Om)} \le c (\tau t_n^{-1} + h^2t_n^{-\al})  \| v\|_{L^2\II}.
\end{equation*}
Surprisingly, for both $v\in L^2\II$ and $Av\in L^2\II$, the error estimate deteriorates
as time $t$ approaches zero, but for any fixed time $t_n>0$, it can achieve a first-order
convergence. Extensive numerical experiments confirm the optimality of the convergence rates.
Our estimates are derived using the techniques developed by \cite{LubichSloanThomee:1996} for
convolution quadrature and in the interesting recent work of \cite{McleanMustapha:2014} on a piecewise constant discontinuous
Galerkin method. The proof essentially boils down to some delicate estimates of the kernel function,
which involves the polylogarithmic function. Finally, we note that our results are applicable to
more general sectorial operators, including the very interesting case of the space time fractional
differential problem involving a Riemann-Liouville derivative in space \cite{JinLazarovPasciakZhou:2014}.

The rest of the paper is organized as follows. In Section \ref{sec:prelim}, we recall preliminaries on
the fully discrete scheme, and derive the solution representation for the semidiscrete and fully discrete
schemes, which play an important role in the error analysis. The full technical details of the convergence
analysis are presented in Section \ref{sec:err}. In Section \ref{sec:space-frac} we
consider the adaptation of the scheme to a one-dimensional time-space fractional differential equation,
and derive optimal convergence rate. It appears to be the first error estimates expressed directly in
terms of the data regularity for such equation with nonsmooth data.  Numerical results are presented in
Section \ref{sec:numer} to confirm the convergence theory and the robustness of the scheme. Throughout,
the notation $c$, with or without a subscript, denotes a generic constant, which may differ at different
occurrences, but it is always independent of the mesh size $h$ and the time step size $\tau$.

\section{Preliminary}\label{sec:prelim}
In this part, we give the semidiscrete and fully discrete schemes, based on a
standard Galerkin method in space and the L1 approximation in time.
\subsection{Semidiscrete scheme}
Since the solution $u:(0,T]\rightarrow L^2\II$ can be analytically extended to the sector
$\{ z\in\mathbb{C};z\neq0,|\arg z|<\pi/2 \}$ \cite[Theorem 2.1]{SakamotoYamamoto:2011}, when $f\equiv0$,
we may apply the Laplace transform to equation \eqref{eqn:fde} to deduce
\begin{equation}\label{eqn:laptrans}
    z^\al \widehat u(z) + A\widehat u(z)=z^{\al-1} v,
\end{equation}
with the operator $A=-\Delta$ with a homogeneous Dirichlet boundary condition. Hence the solution $u(t)$ can be represented by
\begin{equation}\label{eqn:interep}
    u(t)= \frac{1}{2\pi \mathrm{i}} \int_{\Gamma_{\theta,\delta}} e^{zt}(z^{\al}I+A)^{-1}z^{\al-1} v \,dz,
\end{equation}
where the contour $\Gamma_{\theta,\delta}$ is given by
\begin{equation*}
  \Gamma_{\theta,\delta}=\left\{z\in \mathbb{C}: |z|=\delta, |\arg z|\le \theta\right\}\cup
  \{z\in \mathbb{C}: z=\rho e^{\pm i\theta}, \rho\ge \delta\}.
\end{equation*}
Throughout, we choose the angle $\theta\in (\pi/2,\pi)$. Then
$z^{\al} \in \Sigma_{\theta'}$ with $ \theta'=\al\theta< \pi$ for all $z\in\Sigma_{\theta}:
=\{z\in\mathbb{C}: |\arg z|\leq \theta\}$. Then
there exists a constant $c$ which depends only on $\theta$ and $\al$ such that
\begin{equation}\label{eqn:resol}
  \| (z^{\al}I+A)^{-1} \| \le cz^{-\al},  \quad \forall z \in \Sigma_{\theta}.
\end{equation}

Now we introduce the spatial semidiscrete scheme based on the Galerkin finite element method.
Let $\mathcal{T}_h$ be a shape regular and quasi-uniform triangulation of the
domain $\Omega $ into $d$-simplexes, denoted by $\DDelta$. Then
over the triangulation $\mathcal{T}_h$ we define a continuous piecewise linear finite
element space $X_h$ by
\begin{equation*}
  X_h= \left\{v_h\in H_0^1(\Omega):\ v_h|_\DDelta \mbox{ is a linear function},\ \forall \DDelta \in \mathcal{T}_h\right\}.
\end{equation*}
On the space $X_h$, we define the $L^2(\Omega)$-orthogonal projection $P_h:L^2(\Omega)\to X_h$ and
the Ritz projection $R_h:H^1_0(\Omega)\to X_h$, respectively, by
\begin{equation*}
  \begin{aligned}
    (P_h \fy,\chi) & =(\fy,\chi) \quad\forall \chi\in X_h,\\
    (\nabla R_h \fy,\nabla\chi) & =(\nabla \fy,\nabla\chi) \quad \forall \chi\in X_h,
  \end{aligned}
\end{equation*}
where $(\cdot,\cdot)$ denotes the $L^2\II$-inner product. Then the semidiscrete Galerkin scheme
for problem \eqref{eqn:fde} reads: find $u_h(t)\in X_h$ such that
\begin{equation}\label{eqn:fem}
  (\Dal u_h,\chi) + (\nabla u_h,\nabla \chi) = (f,\chi)\quad\forall \chi\in X_h,
\end{equation}
with $u_h(0)=v_h\in X_h$.
Upon introducing the discrete Laplacian $\Delta_h: X_h\to X_h$ defined by
\begin{equation*}
  -(\Delta_h\fy,\chi)=(\nabla\fy,\nabla\chi)\quad\forall\fy,\,\chi\in X_h,
\end{equation*}
the spatial semidiscrete scheme \eqref{eqn:fem} can be rewritten into
\begin{equation}\label{eqn:fdesemidis}
  \Dal u_h(t) +A_h u_h(t) = f_h(t), \,\, t>0
\end{equation}
with $u_h(0)=v_h\in X_h$, $f_h=P_hf$ and $A_h=-\Delta_h$. Like before, the solution $u_h$ to
\eqref{eqn:fdesemidis} with $f_h\equiv0$ can be represented by
\begin{equation}\label{eqn:semi-interep}
    u_h(t)= \frac{1}{2\pi \mathrm{i}} \int_{\Gamma_{\theta,\delta}} e^{zt}(z^{\al}+A_h)^{-1}z^{\al-1} v_h\,dz.
\end{equation}
Further, for later analysis, we let $w_h=u_h-v_h$. Then $w_h$ satisfies the problem:
\begin{equation*}
    \Dal w_h  + A_hw_h = -A_h v_h,
\end{equation*}
with $w_h(0)=0$. The Laplace transform gives
\begin{equation*}
    z^\al \hv(z)+ A_h\hv(z) = -z^{-1} A_hv_h.
\end{equation*}
Hence, $\hv(z)= K_1(z) v_h$, with
\begin{equation*}
K_1(z)=-z^{-1}(z^\al I + A_h)^{-1}A_h,
\end{equation*}
and the desired representation for $w_h(t)$ follows from the inverse Laplace transform
\begin{equation}\label{eqn:semisol}
  w_h(t)=\frac{1}{2\pi \mathrm{i}}\int_{\Gamma_{\theta,\delta}} e^{zt}K_1(z)v_h dz.
\end{equation}

The semidiscrete solution $u_h$ satisfies the following estimates from \cite{JinLazarovZhou:2013}.
The log factor in the estimates in \cite[Section 3]{JinLazarovZhou:2013}
can be removed using the operator trick in \cite[Section 3]{BazhlekovaJinLazarovZhou:2014}.

\begin{theorem}\label{thm:error-semi}
Let $u$ and $u_h$ be the solutions of problems \eqref{eqn:fde} and \eqref{eqn:fdesemidis} with
$f\equiv0$ and $f_h\equiv0$, respectively. Then the following error estimates hold.
\begin{itemize}
  \item[(a)] If $Av\in L^2\II$ and $v_h=R_h v$, then
  \begin{equation*}
   \| u(t)-u_h(t) \|_{L^2(\Om)} \le ch^2  \| Av \|_{L^2\II}.
  \end{equation*}
  \item[(b)] If $v\in L^2(\Omega)$ and $v_h=P_hv$, then
  \begin{equation*}
   \| u(t_n)-u_h(t) \|_{L^2(\Om)} \le ch^2t^{-\al}  \| v\|_{L^2\II}.
  \end{equation*}
\end{itemize}
\end{theorem}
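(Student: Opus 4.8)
The plan is to pass to the Laplace transform, reduce the semidiscrete error to a family of finite element errors for complex-valued elliptic problems indexed by the contour variable $z$, and then integrate back along a contour scaled to the time $t$. Subtracting \eqref{eqn:semi-interep} from \eqref{eqn:interep} and deforming $\Gamma_{\theta,\delta}$ to the same contour with $\delta=1/t$ (a standard contour deformation, valid since the resolvents are analytic on $\Sigma_\theta\setminus\{0\}$ by virtue of \eqref{eqn:resol}), case (b) reads
\begin{equation*}
  u(t)-u_h(t)=\frac{1}{2\pi\mathrm{i}}\int_{\Gamma_{\theta,\delta}} e^{zt}z^{\al-1}\bigl(\phi(z)-\phi_h(z)\bigr)\,dz,\qquad \delta=1/t,
\end{equation*}
with $\phi(z)=(z^{\al}I+A)^{-1}v$ and $\phi_h(z)=(z^{\al}I+A_h)^{-1}P_hv$; for case (a) one works instead with $w=u-v$ and $w_h=u_h-v_h$ (so that $w(0)=0$), invokes the identity $A_hR_h=P_hA$, and arrives at the same formula with $z^{\al-1}$ replaced by $-z^{-1}$ and $\phi(z)=(z^{\al}I+A)^{-1}Av$, $\phi_h(z)=(z^{\al}I+A_h)^{-1}P_h(Av)$, the residual $v-R_hv$ being added back at the end. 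The key point is that $\phi_h(z)$ is exactly the Galerkin finite element approximation of $\phi(z)$, the latter solving $z^{\al}(\phi,\chi)+(\nabla\phi,\nabla\chi)=(g,\chi)$ for all $\chi\in H^1_0(\Omega)$, with data $g=v$ or $g=Av$.

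The next step is the elliptic analysis, carried out uniformly in $z\in\Sigma_\theta$. Since $z^{\al}\in\Sigma_{\al\theta}$ with $\al\theta<\pi$, an elementary inequality gives $|z^{\al}(\chi,\chi)+(\nabla\chi,\nabla\chi)|\ge c(|z|^{\al}\|\chi\|_{L^2(\Omega)}^2+\|\nabla\chi\|_{L^2(\Omega)}^2)$ with $c=c(\al,\theta)$, and the same bound holds verbatim on $X_h$; this yields the discrete resolvent estimate $\|(z^{\al}I+A_h)^{-1}\|\le c|z|^{-\al}$, and from $A\phi=g-z^{\al}\phi$ together with \eqref{eqn:resol} and convex-domain $H^2$-regularity one obtains $\|\phi(z)\|_{H^2(\Omega)}\le c\|g\|_{L^2(\Omega)}$ with $c$ independent of $z$ and $h$. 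A standard energy estimate then gives $\|\nabla(\phi-\phi_h)\|_{L^2(\Omega)}\le ch\|g\|_{L^2(\Omega)}$, and an Aubin--Nitsche duality argument for the adjoint problem $(\bar z^{\al}I+A)\eta=\phi-\phi_h$ gives $\|\phi(z)-\phi_h(z)\|_{L^2(\Omega)}\le ch^2\|g\|_{L^2(\Omega)}$, again uniformly in $z$. The one place where $z$-uniformity is at risk is that duality produces a term $c|z|^{\al}h^2\|\phi-\phi_h\|_{L^2(\Omega)}$; this is harmless because $\|\phi-\phi_h\|_{L^2(\Omega)}\le c|z|^{-\al}\|g\|_{L^2(\Omega)}$ by the resolvent bound and discrete stability, so the offending factor $|z|^{\al}$ is absorbed.

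It then remains to insert $\|\phi(z)-\phi_h(z)\|_{L^2(\Omega)}\le ch^2\|g\|_{L^2(\Omega)}$ into the contour integral and use the elementary bounds $\int_{\Gamma_{\theta,\delta}}|e^{zt}|\,|z|^{\al-1}\,|dz|\le ct^{-\al}$ and $\int_{\Gamma_{\theta,\delta}}|e^{zt}|\,|z|^{-1}\,|dz|\le c$ for $\delta=1/t$, obtained by splitting into the circular arc (on which $|e^{zt}|\le e$) and the two rays (on which $|e^{zt}|\le e^{-c|z|t}$) and substituting $s=|z|t$. This yields $\|u(t)-u_h(t)\|_{L^2(\Omega)}\le ch^2t^{-\al}\|v\|_{L^2(\Omega)}$ in case (b), and $\|w(t)-w_h(t)\|_{L^2(\Omega)}\le ch^2\|Av\|_{L^2(\Omega)}$ in case (a), to which $\|v-R_hv\|_{L^2(\Omega)}\le ch^2\|Av\|_{L^2(\Omega)}$ is added. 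The main obstacle is precisely the $z$-uniformity of the elliptic step: carried out along the lines of \cite{JinLazarovZhou:2013} it produces a spurious factor $\log(1/h)$, whose clean removal is the substance of the operator trick of \cite{BazhlekovaJinLazarovZhou:2014}, which is why we simply quote the result here.
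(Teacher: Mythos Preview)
The paper does not prove this theorem at all: it is simply quoted from \cite{JinLazarovZhou:2013}, with the remark that the $\log(1/h)$ factor there can be removed via the operator trick of \cite[Section~3]{BazhlekovaJinLazarovZhou:2014}. Your proposal goes further than the paper by sketching the actual argument behind those references --- Laplace representation, reduction to a $z$-parametrised complex elliptic problem, energy and duality estimates, and contour integration with $\delta=1/t$ --- and you correctly identify the one delicate point (uniformity in $z$ of the $L^2$ duality bound) and defer it to the same citation. So your approach is fully consistent with, and indeed more informative than, what the paper itself provides.
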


\subsection{Fully discrete scheme}
Now we describe the fully discrete scheme based on the L1 approximation
\eqref{eqn:$L^1$approx}: find $U_h^n\in X_h$ for $n=1,2,\ldots,N$
\begin{equation}\label{eqn:fully}
    (b_0I+\tau^\al A_h)U_h^n=  b_{n-1} U_h^0+ \sum_{j=1}^{n-1}(b_{j-1}-b_j)U_h^{n-j}+\tau^\al F_h^n,
\end{equation}
with $U_h^0=v_h$ and $F_h^n=P_hf(t_n)$. We focus on the homogeneous case, i.e., $f\equiv 0$, and use
the technique in \cite{LubichSloanThomee:1996} and \cite{McleanMustapha:2014} for convergence analysis.
Throughout, we denote by
\begin{equation*}
\widetilde{\omega}(\xi)=\sum_{{j=0}}^\infty\omega_j\xi^j
\end{equation*}
the generating function of a sequence $\{\omega_j\}_{j=0}^\infty$. To analyze the fully discrete scheme \eqref{eqn:fully},
we first derive a discrete analogue of the solution representation \eqref{eqn:semisol}. The fully discrete
solution $W_h^n:=U_h^n-U_h^0$ satisfies the following time-stepping scheme for $n=1,2,...,N$
\begin{equation*}
    L_1^n(W_h)+ A_hW_h^n=- A_hv_h,
\end{equation*}
with $W_h^0=0$. Next multiplying both sides
of the equation by $\xi^n$ and summing from $1$ to $\infty$ yields
\begin{equation*}
  \sum_{n=1}^\infty L_1^n(W_h)\xi^n + A_h \widetilde W_h(\xi) = - \frac{\xi}{1-\xi} A_hv_h.
\end{equation*}
Now we focus on the term $\sum_{n=1}^\infty L_1^n(W_h)\xi^n$. By the definition of the
difference operator $L_1^n$, we have
\begin{equation*}
\begin{split}
  \sum_{n=1}^\infty L_1^n(W_h)\xi^n & = \tau^{-\al}\sum_{n=1}^\infty \left(b_0W_h^n+\sum_{j=1}^{n-1}(b_j-b_{j-1})W_h^{n-j}\right) \xi^n\\
    & = \tau^{-\al} \sum_{n=1}^\infty \left(\sum_{j=0}^{n-1} b_j W_h^{n-j}\right)\xi^n
    - \tau^{-\al}\sum_{n=1}^\infty \left(\sum_{j=1}^{n-1} b_{j-1} W_h^{n-j}\right)\xi^n\\
     &:=I-II.\\
\end{split}
\end{equation*}
Using the fact $W_h^0=0$ and the convolution rule of generating functions (discrete Laplace transform), the first term
$I$ can be written as
\begin{equation*}
I = \tau^{-\al} \sum_{n=1}^\infty (\sum_{j=0}^{n} b_j W_h^{n-j})\xi^n = \tau^{-\al} \widetilde b(\xi)\widetilde W_h(\xi).
\end{equation*}
Similarly, the second term $II$ can be written as
\begin{equation*}
\begin{split}
II = \tau^{-\al} \sum_{n=1}^\infty (\sum_{j=1}^{n} b_{j-1} W_h^{n-j})\xi^n
= \tau^{-\al} \xi \sum_{n=1}^\infty (\sum_{j=0}^{n-1} b_{j} W_h^{n-1-j})\xi^{n-1}= \tau^{-\al}\xi \widetilde b(\xi)\widetilde W_h(\xi).
\end{split}
\end{equation*}
Hence, we arrive at
\begin{equation*}
  \sum_{n=1}^\infty L_1^n(W_h)\xi^n  = \tau^{-\al}(1-\xi)\widetilde b(\xi)\widetilde W_h(\xi).
\end{equation*}
Next we derive a proper representation for $\widetilde b(\xi)$:
\begin{equation*}
  \begin{aligned}
   \widetilde b(\xi) &= \frac{1}{\Gamma(2-\al)}\sum_{{j=0}}^\infty (( j+1 )^{1-\al}-j^{1-\al}) \xi^j\\
    &= \frac{1-\xi}{\xi\Gamma(2-\al)} \sum_{j=1}^\infty j^{1-\al} \xi^j= \frac{(1-\xi)\pl_{\al-1}(\xi)}{\xi\Gamma(2-\al)},
  \end{aligned}
\end{equation*}
where $\pl_{p}(z)$ denotes the polylogarithm function defined by (see \cite{Lewin:1981})
\begin{equation*}
  \pl_p(z)=\sum_{j=1}^\infty \frac{z^j}{j^p}.
\end{equation*}
The polylogarithm function $\pl_p(z)$ is well defined for $|z|<1$, and it can
be analytically continued to the split complex plane $\mathbb{C}\setminus [1,\infty)$; see \cite{Flajolet:1999}.
With $z=1$, it recovers the Riemann zeta function $\zeta(p)=\pl_p(1)$. Therefore,
the fully discrete solution $\widetilde W_h(\xi)$ can be represented by
\begin{equation*}
  \widetilde W_h(\xi)= -\frac{\xi}{1-\xi} \left( \frac{(1-\xi)^2}{\xi\tau^\al\Gamma(2-\al)}\pl_{\al-1}(\xi)+A_h\right)^{-1}A_h v_h.
\end{equation*}
Simple calculation shows that the function $\widetilde W_h(\xi)$ is analytic at $\xi=0$. Hence
the Cauchy theorem implies that for $\varrho$ small enough, there holds
\begin{equation*}
    W_h^n = -\frac{1}{2\pi\mathrm{i}}\int_{|\xi|=\varrho} \frac{1}{(1-\xi)\xi^n}
   \left(\frac{(1-\xi)^2}{\xi\tau^\al\Gamma(2-\al)}\pl_{\al-1}(\xi)+A_h\right)^{-1}A_h v_h\, d\xi.
\end{equation*}
Upon changing variable $\xi=e^{-z\tau}$, we obtain
\begin{equation*}
    W_h^n = -\frac{1}{2\pi\mathrm{i}}\int_{\Gamma^0} e^{zt_{n-1}}  \frac{\tau}{1-e^{-z\tau}} \left(
   \frac{(1-e^{-z\tau})^2}{e^{-z\tau}\tau^\al\Gamma(2-\al)}\pl_{\al-1}(e^{-z\tau})+A_h\right)^{-1}A_h v_h \, dz,
\end{equation*}
where the contour $\Gamma^0:=\{ z=-\ln(\varrho)/\tau+\mathrm{i} y:|y|\le {\pi}/{\tau} \}$ is oriented
counterclockwise. By deforming the contour $\Gamma^0$ to $ \Gamma_\tau:=\{ z\in \Gamma_{\theta,\delta}:
|\Im(z)|\le {\pi}/{\tau} \}$ and using the periodicity of the exponential function, we obtain the
following alternative representation for $W_h^n$
\begin{equation}\label{eqn:fullysol}
    W_h^n = -\frac{1}{2\pi\mathrm{i}}\int_{\Gamma_\tau} e^{zt_{n-1}}  \frac{\tau}{1-e^{-z\tau}} \left( \frac{(1-e^{-z\tau})^2}{e^{-z\tau}\tau^\al\Gamma(2-\al)}\pl_{\al-1}(e^{-z\tau})+A_h\right)^{-1}A_h v_h \, dz.
\end{equation}
This representation forms the basis of the error analysis.

\section{Error analysis of the fully discrete scheme}\label{sec:err}
In this part, we derive optimal error estimates for the fully discrete scheme \eqref{eqn:fully}. The analysis
is based on the representations of the semidiscrete and fully discrete solutions, i.e., \eqref{eqn:semisol}
and \eqref{eqn:fullysol}. Upon subtracting them, we may write the difference between $W_h^n$ and $w_h(t_n)$ as
$$ w_h(t_n)-W_h^n= I+II,$$
where the terms $I$ and $II$ are given by
\begin{equation*}
  I=\frac{1}{2\pi\mathrm{i}}\int_{\Gamma_{\theta,\delta}\backslash\Gamma_\tau} e^{zt_n} K_1(z) v_hdz
\end{equation*}
and
\begin{equation*}
  II=\frac{1}{2\pi\mathrm{i}}\int_{\Gamma_\tau} e^{zt_n} (K_1(z)-e^{-z\tau} K_2(z))v_hdz.
\end{equation*}
Here the kernel functions $K_1(z)$ and $K_2(z)$ are given by
\begin{equation}\label{eqn:KB1}
 K_1(z):= - z^{-1}(z^\al+A_h)^{-1}A_h =: z^{-1} B_1(z)
\end{equation}
and
\begin{equation}\label{eqn:KB2}
  K_2(z):= - \frac{\tau}{1-e^{-z\tau}}
\left( \frac{1-e^{-z\tau}}{\tau^\al}\psi(z\tau)+A_h\right)^{-1}A_h =: \frac{\tau}{1-e^{-z\tau}}B_2(z),
\end{equation}
respectively, and the auxiliary function $\psi$ is define by
\begin{equation*}
  \psi(z\tau)=\frac{e^{z\tau}-1}{\Gamma(2-\al)}\pl_{\al-1}(e^{-z\tau}).
\end{equation*}

Since the function $|e^{-z\tau}|$ is uniformly bounded on the contour $\Gamma_{\tau}$, we deduce
\begin{equation}\label{split1}
\begin{split}
    \|  K_1(z)-e^{-z\tau} K_2(z) \| &\le |e^{-z\tau}|\|  K_1(z)-K_2(z) \| + |1-e^{-z\tau} |\|K_1(z)\| \\
    & \le c\|  K_1(z)-K_2(z) \| + c|z|\tau \|K_1(z)\|\\
    & \le c\|K_1(z)-K_2(z)\| + c\tau,
\end{split}
\end{equation}
where the last line follows from the inequality, in view of \eqref{eqn:resol}:
\begin{equation*}
  \|K_1(z)\| = |z|^{-1}\|-I+z^\alpha(z^\alpha+A_h)^{-1}\|\leq c|z|^{-1}.
\end{equation*}
Thus it suffices to establish a bound on $\|  K_1(z)-K_2(z) \|$, which will be carried out below.
First we give bounds on the function $\chi(z)=\tau^{-1}(1-e^{-z\tau})$.
\begin{lemma}\label{lem:para_equiv1}
Let $\chi(z)=\tau^{-1}(1-e^{-z\tau})$. Then for all $z\in\Gamma_\tau$, there hold for some $c_1,c_2>0$
\begin{equation*}
 |\chi(z)-z|\le c |z|^2\tau \quad \mbox{and}\quad c_1|z| \le |\chi(z)|\le c_2|z|.
\end{equation*}
\end{lemma}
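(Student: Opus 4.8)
The plan is to substitute $w=z\tau$ and reduce both inequalities to elementary facts about the scalar function $1-e^{-w}$. The decisive preliminary observation — and the reason the whole analysis is conducted on the truncated contour $\Gamma_\tau$ rather than on $\Gamma_{\theta,\delta}$ — is that for $z\in\Gamma_\tau$ the restriction $|\Im z|\le\pi/\tau$ gives $|\Im w|=\tau|\Im z|\le\pi$ immediately; moreover, on the two rays $\arg z=\pm\theta$, so $|w|=|\Im w|/\sin\theta\le\pi/\sin\theta$, and on the circular arc $|w|=\delta\tau\le\pi$ (this last bound being implicit in the very definition of $\Gamma_\tau$). Hence $w=z\tau$ lies, uniformly in $\tau$, in the compact set $D:=\{w\in\mathbb{C}:|\Im w|\le\pi,\ |w|\le\pi/\sin\theta\}$, which depends only on $\theta$; and on $D$ the only zero of $1-e^{-w}$ is $w=0$.

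For the first inequality I would write $\chi(z)-z=\tau^{-1}\bigl((1-e^{-w})-w\bigr)$ and use the power series identity $(1-e^{-w})-w=w^{2}g_{0}(w)$, where $g_{0}(w):=\sum_{k\ge 0}\frac{(-1)^{k+1}}{(k+2)!}w^{k}$ is entire. Since $g_0$ is continuous on the compact set $D$, it is bounded there by some $c=c(\theta)$, and hence, using $|w|=|z|\tau$,
\[
|\chi(z)-z|=\tau^{-1}|w|^{2}\,|g_0(w)|\le c\,\tau^{-1}|w|^{2}=c\,\tau\,|z|^{2}.
\]
(If an explicit constant is wanted, $|g_0(w)|\le\tfrac12 e^{|w|}\le\tfrac12 e^{\pi/\sin\theta}$ works.)

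For the second inequality I would consider $g(w):=(1-e^{-w})/w$, with $g(0):=1$; this is an entire function whose zeros are exactly the points $2\pi\mathrm{i}k$, $k\neq 0$, all of which lie outside $D$ since $|\Im(2\pi\mathrm{i}k)|=2\pi|k|>\pi$. Thus $g$ is zero-free on the compact set $D$, so $|g|$ attains there a finite maximum $c_2$ and a strictly positive minimum $c_1$, both depending only on $\theta$; and since $1-e^{-w}=w\,g(w)$,
\[
|\chi(z)|=\tau^{-1}|1-e^{-w}|=\tau^{-1}|w|\,|g(w)|=|z|\,|g(w)|,
\]
which immediately yields $c_1|z|\le|\chi(z)|\le c_2|z|$.

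The hard part — really the only part requiring care — is this first step: checking that the change of variable keeps $w=z\tau$ inside the strip $|\Im w|\le\pi$ uniformly in $\tau$, which is exactly what the truncation of the contour at height $\pi/\tau$ buys, and which is what prevents $1-e^{-w}$ from vanishing away from the origin. Everything after that is routine scalar estimation. If one prefers to avoid the compactness argument for the lower bound, an alternative is to split $\Gamma_\tau$ into the rays and the arc: on the rays $\Re(z\tau)=\rho\tau\cos\theta\le 0$ (here the hypothesis $\theta>\pi/2$ is used), so $|1-e^{-z\tau}|\ge|e^{-z\tau}|-1=e^{|\Re(z\tau)|}-1\ge|\Re(z\tau)|=|z|\tau|\cos\theta|$, giving $|\chi(z)|\ge|\cos\theta|\,|z|$; while on the arc $|z|=\delta$, so for $\tau$ small the first inequality already forces $|\chi(z)-z|\le\tfrac12|z|$, hence $|\chi(z)|\ge\tfrac12|z|$.
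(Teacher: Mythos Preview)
Your proof is correct. For the first estimate both you and the paper argue via the Taylor series of $1-e^{-w}$ together with the uniform bound $|z\tau|\le\pi/\sin\theta$ on $\Gamma_\tau$, so there is no real difference there. For the lower bound on $|\chi(z)|$, however, the paper takes precisely the splitting route you sketch as an alternative: it treats the rays $\Gamma_\tau^\pm$ by the explicit inequality
\[
\left|\frac{1-e^{-z\tau}}{z\tau}\right|\ge\frac{e^{-\rho\cos\theta}-1}{\rho}\ge -\cos\theta,
\]
which is your estimate $|\chi(z)|\ge|\cos\theta|\,|z|$ in different dress, and handles the circular arc by Taylor expansion under the standing assumption $|z\tau|<1$. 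Your primary argument---observe that $g(w)=(1-e^{-w})/w$ is entire, that its nonzero zeros $2\pi\mathrm{i}k$ all lie outside the strip $|\Im w|\le\pi$, and then invoke compactness of $D$ to get a positive minimum for $|g|$---is cleaner and dispenses with the case split entirely. The trade-off is that compactness yields no explicit constant, whereas the paper's computation produces $c_1=|\cos\theta|$ on the rays; since these constants are never tracked downstream, your version is arguably preferable.
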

\begin{proof}
We note that $|z|\tau \le \pi/\sin\theta$ for $z \in \Gamma_\tau$. Then the first assertion follows by
\begin{equation*}
  |\chi(z) -z| \le |z|^2\tau \bigg| \sum_{j=0}^\infty \frac{|z|^j\tau^j}{(j+2)!}  \bigg| \le c |z|^2\tau
\quad \text{for}~~  z\in\Gamma_\tau.
\end{equation*}
Next we consider the second claim. The upper bound on $\chi(z)$ is trivial. Thus it suffices to verify
the lower bound. To this end, we split the contour $\Gamma_\tau$ into three disjoint parts $\G_\tau=
\G_\tau^+ \cup \G_{\tau}^c \cup \G_\tau^-$, with $\G_\tau^+$ and $\Gamma^-_\tau$ being the rays in the
upper and lower half planes, respectively, and $\Gamma_\tau^c$ being the circular arc. Here we set
$\xi=-z\tau$ with $\rho\equiv|\xi|\in(0,\pi/\sin\theta)$. We first
consider the case of $z\in \Gamma_\tau^+$, for which $\xi=\rho e^{-i(\pi-\theta)}$, $\rho\in(1,\pi/
\sin\theta)$. Using the trivial inequality $|\cos(\rho\sin\theta)|\le 1$, we obtain
\begin{equation}\label{eqn:arg1}
\begin{split}
  \left| \frac{1-e^{-z\tau}}{z\tau}  \right| & = \left|  \frac{e^{\xi}-1}{\xi}\right|
  = \frac{|e^{-\rho\cos\theta}\cos(\rho\sin\theta)-1-\mathrm{i}e^{-\rho\cos\theta}\sin(\rho\sin\theta)|}{\rho} \\
& \ge \frac{(e^{-2\rho\cos\theta}+1-2e^{-\rho\cos\theta})^{1/2}}{\rho}=\frac{e^{-\rho\cos\theta}-1}{\rho}\ge -\cos\theta >0
\end{split}
\end{equation}
due to positivity and monotonicity of $(e^{-\rho\cos\theta}-1)/\rho$ as a function of $\rho$ over the
interval $(0,\infty)$. The case of $z\in \G_\tau^-$ follows analogously. Last, we consider $z\in \G_\tau^c$,
the circular arc. In this case, by means of Taylor expansion, we have
\begin{equation*}
 \chi(z) = z \left(1 + \sum_{j=1}^\infty (-1)^{j}\frac{z^{j}\tau^j}{(j+1)!}\right).
\end{equation*}
From this and the fact that $\rho=|z\tau|<1$, it follows directly that $|\chi(z)|\geq c|z|$ for $z\in
\Gamma_\tau^c$. This completes the proof of the lemma.
\end{proof}

Then by the trivial inequality $\| B_1(z)  \|\le c$ and Lemma \ref{lem:para_equiv1}, we deduce that
\begin{equation}\label{split2}
\begin{split}
    \|  K_1(z)- K_2(z) \| & \le | z^{-1}-\chi(z)^{-1}| \| B_1(z)\|   +  |\chi(z)|^{-1}\|B_1(z)-B_2(z)\| \\
    & \le \frac{|z-\chi(z)|}{|z\chi(z)|} + c|z|^{-1}\|B_1(z)-B_2(z)\| \\
    &\le c \tau + c|z|^{-1}\|B_1(z)-B_2(z)\| .
\end{split}
\end{equation}
Thus it suffices to establish a bound on $\|B_1(z)-B_2(z)\|$. This will be done using a series of
lemmas. To this end, first we recall an important singular expansion of the function $\pl_p(e^{-z})$
\cite[Theorem 1]{Flajolet:1999}.
\begin{lemma}\label{lem:asym-polylog}
For $p\neq1,2,\ldots$, the function $\mathrm{Li}_p(e^{-z})$ satisfies the singular expansion
\begin{equation}\label{eqn:polylogasymp-0}
  \mathrm{Li}_p(e^{-z})\sim \Gamma(1-p)z^{p-1}+\sum_{k=0}^\infty (-1)^k\zeta(p-k)\frac{z^k}{k!}\quad \mbox{as } z\to 0,
\end{equation}
where $\zeta$ is the Riemann zeta function.
\end{lemma}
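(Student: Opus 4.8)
This statement is classical, recorded as \cite[Theorem 1]{Flajolet:1999}; the route I would take is the Mellin-transform method, which I sketch here. Starting from the Dirichlet-type series $\pl_p(e^{-z})=\sum_{n\ge1}n^{-p}e^{-nz}$ (valid for $\Re z>0$) and the elementary transform $\int_0^\infty e^{-nz}z^{s-1}\,dz=\Gamma(s)n^{-s}$, termwise integration — legitimate by absolute convergence whenever $\Re s>0$ and $\Re(s+p)>1$ — identifies the Mellin transform of $\pl_p(e^{-\cdot})$ on its fundamental strip,
\begin{equation*}
  \int_0^\infty \pl_p(e^{-z})\,z^{s-1}\,dz=\Gamma(s)\,\zeta(s+p),\qquad \Re s>\max\{0,\,1-\Re p\},
\end{equation*}
and hence, by Mellin inversion, for any $c$ in that strip,
\begin{equation*}
  \pl_p(e^{-z})=\frac{1}{2\pi\mathrm{i}}\int_{c-\mathrm{i}\infty}^{c+\mathrm{i}\infty}\Gamma(s)\,\zeta(s+p)\,z^{-s}\,ds .
\end{equation*}

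Next I would move the line of integration leftward to $\Re s=-(M+\tfrac12)$ for an arbitrary integer $M\ge0$, collecting the residues crossed. The integrand has simple poles only at $s=1-p$, coming from the pole of $\zeta$ at $1$ (residue $1$), and at $s=-k$, $k=0,1,2,\dots$, coming from the poles of $\Gamma$ (residue $(-1)^k/k!$). The hypothesis $p\notin\{1,2,\dots\}$ is precisely what forces $1-p\notin\{0,-1,-2,\dots\}$, so all these poles are simple and distinct. The residue at $s=1-p$ equals $\Gamma(1-p)\,z^{p-1}$, while the residue at $s=-k$ equals $(-1)^k\zeta(p-k)\,z^k/k!$; summing them reproduces exactly the right-hand side of \eqref{eqn:polylogasymp-0}, truncated at order $M$.

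Finally, the shifted integral $\frac{1}{2\pi\mathrm{i}}\int_{\Re s=-(M+1/2)}\Gamma(s)\zeta(s+p)z^{-s}\,ds$ must be shown to be a genuine remainder, of size $O(|z|^{M+1/2})$, which with $M$ arbitrary yields the asymptotic expansion \eqref{eqn:polylogasymp-0}; this is where I expect the real work to lie. On a fixed vertical line Stirling gives the exponential decay $|\Gamma(\sigma+\mathrm{i}t)|\le c\,|t|^{\sigma-1/2}e^{-\pi|t|/2}$, while $\zeta$ grows only polynomially in $|t|$ (via its functional equation, since here $\Re(s+p)<0$); since $|z^{-s}|=|z|^{-\Re s}e^{(\arg z)\Im s}$, the integrand is absolutely integrable and one gets the bound as long as $|\arg z|<\pi/2$. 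The obstacle is extending this to $z\to0$ throughout the cut plane $\mathbb{C}\setminus(-\infty,0]$, the regime relevant later for $\Gamma_\tau$ with $\theta$ close to $\pi$: once $|\arg z|\ge\pi/2$ the factor $e^{(\arg z)\Im s}$ can overwhelm the decay of $\Gamma$, so one must instead bend the contour into the left half-plane along rays making an angle slightly larger than $\pi/2$ with the positive axis, or run a Phragm\'en--Lindel\"of argument for the analytic function
\begin{equation*}
  z^{-(M+1)}\Bigl(\pl_p(e^{-z})-\Gamma(1-p)z^{p-1}-\sum_{k=0}^{M}\tfrac{(-1)^k}{k!}\zeta(p-k)z^k\Bigr)
\end{equation*}
on the slit disc $\{\,0<|z|<2\pi\,\}\setminus(-2\pi,0]$, combining the sectorial bound above with a crude polynomial bound up to the slit. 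The residue bookkeeping is routine; it is this contour-bending/analytic-continuation step that carries the technical weight, which is why one is content to quote the result directly from \cite{Flajolet:1999}.
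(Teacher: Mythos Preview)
The paper does not prove this lemma at all: it is stated without proof and attributed to \cite[Theorem~1]{Flajolet:1999}, with only a remark that the expansion is valid throughout the relevant sector. Your Mellin-transform sketch is exactly the argument Flajolet gives, and the steps you outline (identification of the transform as $\Gamma(s)\zeta(s+p)$, contour shift, residue collection, Stirling-based remainder control, and the contour-bending needed for $|\arg z|\ge\pi/2$) are all correct; there is nothing further to compare.
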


\begin{remark}
The singular expansion in Lemma \ref{lem:asym-polylog} is stated only for $z\to0$. However, we note
that the expansion is valid in the sector $\Sigma_{\theta,\delta}$ \cite[pp. 377, proof of Lemma 2]{Flajolet:1999}.
\end{remark}

We shall need the following results. The first result gives the absolute convergence
of a special series involving the Riemann zeta function $\zeta$.
\begin{lemma}\label{lem:pre}
Let $ |z|\le \pi/\sin\theta $ with $\theta \in (\pi/2,5\pi/6)$, and $p=\alpha-1$. Then the series \eqref{eqn:polylogasymp-0}
converges absolutely.
\end{lemma}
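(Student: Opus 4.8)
The plan is to bound the general term of the infinite sum $\sum_{k=0}^\infty (-1)^k\zeta(p-k)\,z^k/k!$ appearing in \eqref{eqn:polylogasymp-0} (here $p=\alpha-1$, so $p-k=\alpha-1-k<0$ for all $k\ge0$) and to recognize it as dominated by a convergent geometric-type series. The leading term $\Gamma(1-p)z^{p-1}=\Gamma(2-\alpha)\,z^{\alpha-2}$ is a single finite quantity for $z\neq0$, so all the content of the statement lies in the infinite sum; I would make this remark at the outset.

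First I would invoke the functional equation of the Riemann zeta function,
\[
\zeta(s)=2^s\pi^{s-1}\sin\!\big(\tfrac{\pi s}{2}\big)\,\Gamma(1-s)\,\zeta(1-s),
\]
applied with $s=p-k=\alpha-1-k$. Since $1-s=2-\alpha+k\ge2-\alpha>1$ for every $k\ge0$, the factor $\zeta(1-s)=\zeta(2-\alpha+k)$ is monotonically decreasing in $k$ and bounded above by $\zeta(2-\alpha)$, while $|\sin(\pi s/2)|\le 1$. This yields the clean bound
\[
|\zeta(p-k)|\le \zeta(2-\alpha)\,2^{\alpha-1}\pi^{\alpha-2}\,(2\pi)^{-k}\,\Gamma(2-\alpha+k).
\]

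Next I would substitute this into the $k$-th term and use the elementary estimate $\Gamma(2-\alpha+k)/k!=\Gamma(k+2-\alpha)/\Gamma(k+1)\le c\,(1+k)^{1-\alpha}$, which follows from Stirling's formula (or from Gautschi-type monotonicity bounds, using $0<1-\alpha<1$). Combining with the preceding display gives
\[
\left|(-1)^k\zeta(p-k)\frac{z^k}{k!}\right|\le c\,(1+k)^{1-\alpha}\Big(\frac{|z|}{2\pi}\Big)^k .
\]
The decisive point — and the only place the hypothesis on $\theta$ enters — is that $\theta\in(\pi/2,5\pi/6)$ forces $\sin\theta>1/2$, hence $|z|\le\pi/\sin\theta<2\pi$, so that $r:=|z|/(2\pi)<1$ strictly. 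The right-hand side is then the general term of the convergent series $c\sum_{k\ge0}(1+k)^{1-\alpha}r^k$ (convergence is immediate from the root or ratio test, since the polynomial factor is subdominant), and absolute convergence of \eqref{eqn:polylogasymp-0} follows by comparison. I do not anticipate a genuine obstacle: the only subtle point is the uniform control of $\zeta$ at the large negative arguments $\alpha-1-k$, which the functional equation supplies, together with the bookkeeping observation that the resulting $\Gamma(k+2-\alpha)$ growth is exactly cancelled by $k!$ up to a harmless polynomial factor, leaving a geometric series with ratio $|z|/(2\pi)$.
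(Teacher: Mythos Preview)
Your proof is correct and follows essentially the same approach as the paper: both use the functional equation of the Riemann zeta function to bound $|\zeta(p-k)|$ in terms of $\Gamma(2-\alpha+k)/(2\pi)^k$, and both identify the crucial inequality $|z|/(2\pi)\le 1/(2\sin\theta)<1$ coming from $\theta\in(\pi/2,5\pi/6)$. The only cosmetic difference is that the paper applies Stirling's formula to compute $\limsup_k\sqrt[k]{|\zeta(p-k)||z|^k/k!}\le 1/(2\sin\theta)$ and invokes the root test, whereas you carry out the comparison explicitly, bounding $\Gamma(k+2-\alpha)/k!$ by $c(1+k)^{1-\alpha}$ and majorizing termwise by $c(1+k)^{1-\alpha}r^k$; the two arguments are equivalent in content.
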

\begin{proof}
Using the following well-known functional equation for the Riemann zeta function (see
e.g., \cite{KnoppRobins:2001} for a short proof): for $z\notin \mathbb{Z}$, there holds
\begin{equation*}
    \zeta(1-z)=\frac{2}{(2\pi)^z}\cos\left(\frac{z\pi}{2}\right)\Gamma(z)\zeta(z),
\end{equation*}
we obtain for $p=\al-1\in(-1,0)$
\begin{equation*}
  \begin{aligned}
    \zeta(p-k) &= \zeta(1-(1-p+k))\\
     &=\frac{2}{(2\pi)^{1-p+k}}\cos\left(\frac{(1-p+k)\pi}2\right)\Gamma(1-p+k)\zeta(1-p+k).
  \end{aligned}
\end{equation*}
By Stirling's formula for the Gamma function $\Gamma(x)$, $x\to \infty$ \cite[pp. 257]{AbramowitzStegun:1964}
\begin{equation*}
   \Gamma(x+1) = x^{x+1} e^{-x} \sqrt{\frac{2\pi}{x}}\left(1+O(x^{-1})\right)
\end{equation*}
and that $\zeta(1-p+k) \to 1 $ as $k\to \infty$, we have
\begin{equation*}
   \lim_{k\to \infty}\sqrt[k]{ \frac{|\zeta(p-k)||z|^k}{k!} } \le \frac{1}{2\sin\theta} \quad \forall  |z|\le \pi/\sin\theta.
\end{equation*}
Since for $\theta \in (\pi/2,5\pi/6)$, $2\sin\theta>1$, the series converges absolutely.
\end{proof}

Next we state an error estimate for the function $\frac{1-e^{-z\tau}}{\tau^\al}\psi(z\tau)$ with respect to
$z^\alpha$.
\begin{lemma}\label{lem:pre1}
Let $\psi(z)=\frac{e^{z}-1}{\Gamma(2-\al)}\pl_{\al-1}(e^{-z})$. Then for the choice $\theta\in(\pi/2,5\pi/6)$, there holds
\begin{equation*}
   |\frac{1-e^{-z\tau}}{\tau^\al}\psi(z\tau)-z^\al| \le c |z|^2\tau^{2-\al} \quad \forall z\in \Gamma_\tau.
\end{equation*}
\end{lemma}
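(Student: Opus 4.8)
The plan is to expand $\psi(z\tau)$ using the singular expansion of the polylogarithm from Lemma~\ref{lem:asym-polylog}, multiply by $\frac{1-e^{-z\tau}}{\tau^\al}$, and show that the leading term reproduces $z^\al$ exactly while the remainder is $O(|z|^2\tau^{2-\al})$. Write $w = z\tau$, so that $|w| \le \pi/\sin\theta$ on $\Gamma_\tau$. By definition $\psi(w) = \frac{e^w-1}{\Gamma(2-\al)}\pl_{\al-1}(e^{-w})$, and by the Remark following Lemma~\ref{lem:asym-polylog} together with Lemma~\ref{lem:pre} (which guarantees absolute convergence for $p=\al-1$ and $\theta\in(\pi/2,5\pi/6)$), we may substitute
\begin{equation*}
  \pl_{\al-1}(e^{-w}) = \Gamma(2-\al)\,w^{\al-2} + \sum_{k=0}^\infty (-1)^k\zeta(\al-1-k)\frac{w^k}{k!}.
\end{equation*}
Hence $\psi(w) = \frac{e^w-1}{w^{2-\al}} + \frac{e^w-1}{\Gamma(2-\al)}\sum_{k=0}^\infty (-1)^k\zeta(\al-1-k)\frac{w^k}{k!}$.

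Next I would multiply by $\frac{1-e^{-w}}{\tau^\al} = \frac{1-e^{-w}}{w^\al}\,z^\al$ and isolate the main term. From the first piece, $\frac{1-e^{-w}}{\tau^\al}\cdot\frac{e^w-1}{w^{2-\al}} = z^\al\cdot\frac{(1-e^{-w})(e^w-1)}{w^2} = z^\al\cdot\frac{(e^{w/2}-e^{-w/2})^2}{w^2}$, which equals $z^\al$ up to a factor $1 + O(w^2)$ by Taylor expansion; this contributes an error $O(|z|^\al |w|^2) = O(|z|^{2+\al}\tau^2)$. Wait — that is $O(|z|^{2+\al}\tau^2)$, which on $\Gamma_\tau$ (where $|z|\tau\le c$) is indeed $\le c|z|^2\tau^{2-\al}$, so it is within the claimed bound. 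The second piece, after multiplication by $\frac{1-e^{-w}}{\tau^\al}$, has $k=0$ term $\frac{1-e^{-w}}{\tau^\al}\cdot\frac{\zeta(\al-1)}{\Gamma(2-\al)} = O(|w|/\tau^\al) = O(|z|\tau^{1-\al})$, and $k\ge1$ terms of size $O(|w|^{k+1}/\tau^\al)$; summing the absolutely convergent tail gives $O(|w|^2/\tau^\al) = O(|z|^2\tau^{2-\al})$. So the genuinely dangerous contribution is the single $O(|z|\tau^{1-\al})$ term from $k=0$, which is \emph{larger} than $|z|^2\tau^{2-\al}$ near $z=\delta$.

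The resolution — and the main obstacle — is that this apparently-too-large $k=0$ term is not actually present: one must check that the constant $\zeta(\al-1)$ in the expansion is cancelled, or rather that the leading behavior of $\psi$ is organized so that $\frac{1-e^{-w}}{\tau^\al}\psi(w) - z^\al$ has no $O(|z|\tau^{1-\al})$ part. The cleanest way is to compute the full product $\frac{1-e^{-w}}{w^\al}\psi(w) - w^\al$ as a function of $w$ alone (this is $\tau^\al$ times the quantity of interest divided by $z^\al$... more precisely the quantity is $z^\al\big[\frac{1-e^{-w}}{w^\al}\psi(w) - w^\al\big]\cdot w^{-\al}$, so I want $g(w) := \frac{1-e^{-w}}{w^\al}\psi(w) - w^\al = O(|w|^2)$ as $w\to 0$ and bounded by $c|w|^2$ on the whole compact range $|w|\le\pi/\sin\theta$). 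One verifies $g$ is analytic on the relevant sector (using that $\pl_{\al-1}(e^{-w})$ is analytic on $\mathbb{C}\setminus[0,\infty)\cdot$... here on the contour $e^{-w}\notin[1,\infty)$), that $g(0)=0$ and $g'(0)=0$ by direct Taylor expansion of both $(1-e^{-w})w^{-\al}\psi(w)$ and $w^\al$ — the $w^{\al-1}$-order and $w^{\al}$-order terms must match, which is precisely the content of the polylogarithm identity combined with the elementary expansions of $e^{\pm w}$. Then $|g(w)|\le c|w|^2$ on the compact arc/segment by continuity, and multiplying back, $\big|\frac{1-e^{-w}}{\tau^\al}\psi(w) - z^\al\big| = |z|^\al\cdot|w|^{-\al}\cdot|g(w)| \le c|z|^\al|w|^{-\al}|w|^2 = c|z|^\al \tau^{2-\al}|z|^{2-\al} = c|z|^2\tau^{2-\al}$, as desired. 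I expect the bookkeeping of matching the $w^{\al-1}$ and $w^\al$ coefficients — confirming $g'(0)=0$ so that the bound is $|w|^2$ and not merely $|w|$ — to be the delicate point, with everything else being routine estimation of an analytic function on a compact set.
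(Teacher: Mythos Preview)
Your overall strategy---substitute the singular expansion of $\pl_{\al-1}(e^{-w})$, multiply through, and identify the leading term as $z^\al$---is exactly what the paper does. The difficulty you encountered, however, is not real: it comes from a dropped factor in your arithmetic.

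When you pass to the ``second piece'' and evaluate the $k=0$ term, you wrote it as $\frac{1-e^{-w}}{\tau^\al}\cdot\frac{\zeta(\al-1)}{\Gamma(2-\al)}$, but you forgot the factor $(e^{w}-1)$ that is still present in $\psi$. The correct $k=0$ contribution is
\[
\frac{1-e^{-w}}{\tau^\al}\cdot\frac{(e^{w}-1)\,\zeta(\al-1)}{\Gamma(2-\al)}
=\frac{\zeta(\al-1)}{\Gamma(2-\al)}\cdot\frac{(e^{w/2}-e^{-w/2})^2}{\tau^\al},
\]
and since $(e^{w/2}-e^{-w/2})^2 = w^2 + O(w^4)$, this is already $O(|w|^2/\tau^\al) = O(|z|^2\tau^{2-\al})$. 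There is no $O(|z|\tau^{1-\al})$ term to worry about; the ``main obstacle'' you identified is a phantom. With this correction your first-piece and second-piece estimates already give the lemma, and the whole discussion of $g(w)$, the claim $g'(0)=0$, and the search for a hidden cancellation can be deleted. (Incidentally, your formula for $g$ is also off: the quantity of interest equals $\tau^{-\al}\big[(1-e^{-w})\psi(w)-w^\al\big]$, so the natural auxiliary function is $h(w)=(1-e^{-w})\psi(w)-w^\al$, and one wants $|h(w)|\le c|w|^2$.)

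The paper's proof simply multiplies out the three series $1-e^{-w}$, $e^{w}-1$, and the singular expansion of $\pl_{\al-1}(e^{-w})$, observes that the leading term is $(z\tau)^\al/\tau^\al=z^\al$, that the next term is $\frac{\zeta(\al-1)}{\Gamma(2-\al)}z^2\tau^{2-\al}$, and that all remaining terms are $O(z^{2+\al}\tau^2)$; uniformity on $\Gamma_\tau$ is supplied by Lemma~\ref{lem:pre}. Your argument, once the missing factor is restored, is the same proof in slightly different packaging.
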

\begin{proof}
Upon noting the fact $ 0 \le  |z\tau| \le \pi/\sin\theta$ and using Taylor expansion and
\eqref{eqn:polylogasymp-0}, we deduce that for $z\in\Gamma_\tau$, there holds
\begin{equation*}
  \begin{aligned}
   e^{z\tau}-1 &=\sum_{j=1}^\infty \frac{(z\tau)^j}{j!},\\
   \pl_{\al-1}(e^{-z\tau}) &= \Gamma(2-\al)(z\tau)^{\al-2} + \sum_{k=0}^\infty (-1)^k\zeta(1-\al-k)\frac{(z\tau)^k}{k!}.
  \end{aligned}
\end{equation*}
Hence, the function $\psi(z)$ can be represented by
\begin{equation*}
\begin{split}
    \psi(z\tau) &=  \sum_{j=1}^\infty \frac{(z\tau)^j}{j!}\left[(z\tau)^{\al-2} + \sum_{k=0}^\infty\frac{(-1)^k\zeta(-\al-k)}{\Gamma(2-\al)}\frac{(z\tau)^k}{k!}\right]\\
    &=  \sum_{j=1}^\infty \frac{(z\tau)^{\al+j-2}}{j!} +\sum_{j=1}^\infty \frac{(z\tau)^j}{j!} \sum_{k=0}^\infty\frac{(-1)^k\zeta(-\al-k)}{\Gamma(2-\al)}\frac{(z\tau)^k}{k!},
\end{split}
\end{equation*}
and
\begin{equation*}
\begin{split}
   \frac{1-e^{-z\tau}}{\tau^\al} \psi(z\tau) &=   \tau^{-\al}\sum_{l=0}^\infty (-1)^l\frac{(z\tau)^{l+1}}{(l+1)!}
   \left[  \sum_{j=1}^\infty \frac{(z\tau)^{\al+j-2}}{j!} +\sum_{j=1}^\infty \frac{(z\tau)^j}{j!} \sum_{k=0}^\infty\frac{(-1)^k\zeta(-\al-k)}{\Gamma(2-\al)}\frac{(z\tau)^k}{k!}  \right]\\
   &= z^\al +  \frac{\zeta(\al-1)}{\Gamma(2-\al)}z^2\tau^{2-\al} + O(z^{2+\al}\tau^{2}).
\end{split}
\end{equation*}
In view of the choice $\theta\in(\pi/2,5\pi/6)$ and Lemma \ref{lem:pre}, the bound is uniform,
since the series converges uniformly for $z\in \Gamma_\tau$. Consequently,
\begin{equation*}
  |\frac{1-e^{-z\tau}}{\tau^\al} \psi(z\tau)-z^\al|
  \le |z|^2\tau^{2-\al} \left(  -\frac{\zeta(\al-1)}{\Gamma(2-\al)}+O((z\tau)^\al) \right)\le c |z|^2\tau^{2-\al},
\end{equation*}
from which the desired assertion follows.
\end{proof}

The next result gives a uniform lower bound on the function $\psi(z)$ on the contour $\Gamma_\tau$.
\begin{lemma}\label{lem:pre2}
Let $\psi(z)=\frac{e^{z}-1}{\Gamma(2-\al)}\pl_{\al-1}(e^{-z})$. Then for any $\theta$ close to $\pi/2$,
there holds for any $\delta<\pi/2\tau$
\begin{equation*}
  |\psi(z\tau)|\ge c>0  \quad \forall z\in \Gamma_\tau.
\end{equation*}
\end{lemma}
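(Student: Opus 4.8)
The plan is to reformulate the statement via the generating function of the L1 weights. Combining the two identities derived in Section 2, $\widetilde b(\xi)=\dfrac{(1-\xi)\pl_{\al-1}(\xi)}{\xi\Gamma(2-\al)}$ and $\psi(z\tau)=\dfrac{e^{z\tau}-1}{\Gamma(2-\al)}\pl_{\al-1}(e^{-z\tau})$, one gets the clean relation $\psi(z\tau)=\widetilde b(e^{-z\tau})$, where $\widetilde b(\xi)=\sum_{j\ge0}b_j\xi^j$ with $b_j=((j+1)^{1-\al}-j^{1-\al})/\Gamma(2-\al)$. Thus the claim becomes $|\widetilde b(e^{-z\tau})|\ge c$ for $z\in\Gamma_\tau$. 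Write $\xi=z\tau$; then $|\xi|=|z|\tau\le\pi/\sin\theta$ as in Lemma \ref{lem:para_equiv1}, on the circular arc $|\xi|=\delta\tau<\pi/2$, and on the rays $\arg\xi=\pm\theta$. I would split $\Gamma_\tau$ according to the size of $|\xi|$.

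When $|\xi|\le\kappa$ for a fixed small $\kappa=\kappa(\al)$, the function $\psi$ is in fact \emph{large}. By the singular expansion of Lemma \ref{lem:asym-polylog}, legitimate on the relevant sector by Lemma \ref{lem:pre}, one has $\pl_{\al-1}(e^{-\xi})=\Gamma(2-\al)\xi^{\al-2}+g(\xi)$ with $g$ analytic and uniformly bounded, so for $\kappa$ small the first term dominates in modulus, giving $|\pl_{\al-1}(e^{-\xi})|\ge\frac12\Gamma(2-\al)|\xi|^{\al-2}$; together with $|e^\xi-1|\ge\frac12|\xi|$ this yields $|\psi(\xi)|\ge\frac14|\xi|^{\al-1}\ge\frac14\kappa^{\al-1}>0$ since $\al-1<0$. (Equivalently, combine Lemma \ref{lem:pre1} with $|1-e^{-z\tau}|\le c|z|\tau$ from Lemma \ref{lem:para_equiv1}, which gives $|\psi(z\tau)|\ge\frac12|z|^\al\tau^\al/|1-e^{-z\tau}|\ge c(|z|\tau)^{\al-1}$.)

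When $|\xi|>\kappa$, the point $w=e^{-\xi}$ ranges over a fixed compact subset of $\mathbb{C}\setminus[1,\infty)$ on which $\psi$ is continuous (no singularity: $\xi\ne0$, $\xi\ne2\pi\mathrm{i}k$ because $\pi/\sin\theta<2\pi$, and $|\arg\xi|\le\theta<\pi$), so by compactness it suffices to show $\widetilde b(w)\ne0$ there. For this I would use that $\{b_j\}$ is positive, strictly decreasing and strictly convex — immediate from $f(x)=x^{1-\al}$ having $f'>0$, $f''<0$, $f'''>0$ on $(0,\infty)$, so that $b_j$, $b_j-b_{j+1}$ and $c_j:=b_j-2b_{j+1}+b_{j+2}$ have constant sign. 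Summation by parts twice gives $\widetilde b(w)=(1-w)^{-2}\sum_{j\ge0}c_j w^j$ with $c_j>0$, $\sum_{j\ge0}c_j=b_0-b_1$, $c_0=b_0-2b_1+b_2$, hence $\bigl|\,\sum_{j\ge0}c_j w^j\,\bigr|\ge c_0-\sum_{j\ge1}c_j=b_0-3b_1+2b_2$ for $|w|\le1$; and $b_0-3b_1+2b_2>0$ reduces to $4+2\cdot3^{1-\al}>5\cdot2^{1-\al}$, which holds for every $\al\in(0,1)$ because $q\mapsto4+2\cdot3^q-5\cdot2^q$ is strictly decreasing on $[0,1]$ and vanishes at $q=1$. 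So $|\widetilde b(w)|\ge(b_0-3b_1+2b_2)/4>0$ on $\{|w|\le1\}\setminus\{1\}$, while $|\widetilde b(w)|\sim|{-}\ln w|^{\al-1}\to\infty$ as $w\to1$ in $\mathbb{C}\setminus[1,\infty)$ (again by Lemma \ref{lem:asym-polylog}). Transferring back to $w=e^{-z\tau}$: on the arc of $\Gamma_\tau$ either $w$ is $O(\delta\tau)$-close to $1$, where $|\psi|$ is large, or $\delta\tau$ is of order one and $w$ lies in the region just treated; on the remaining rays $|w|=e^{|\xi||\cos\theta|}\le e^{\pi|\cot\theta|}$, which is as close to $1$ as desired once $\theta$ is near $\pi/2$, and $w$ stays bounded away from $w=1$, so continuity of $\widetilde b$ across the unit circle (away from the branch point $w=1$) together with the bound above gives $|\psi(\xi)|=|\widetilde b(w)|\ge c>0$.

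I expect the last region — $|\xi|$ of order one, i.e.\ the rays, where $|\xi|$ can be close to $\pi$ — to be the main obstacle: the crude modulus estimate from the singular expansion does not by itself rule out a zero of $\psi$ there, and the decisive input is the convexity of the L1 weights, which converts the question into a Fej\'er/Enestr\"om--Kakeya-type nonvanishing statement for $\widetilde b$ on the closed unit disk. The secondary delicate point is that $w=e^{-z\tau}$ sits just outside the unit disk on the rays; this is exactly what is controlled by the hypotheses $\theta\approx\pi/2$ (so $|w|$ is close to $1$) and $\delta<\pi/(2\tau)$ (so the arc is harmless).
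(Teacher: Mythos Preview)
Your approach is genuinely different from the paper's and the reformulation $\psi(z\tau)=\widetilde b(e^{-z\tau})$ is a nice observation, but the central step has a concrete error.

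For comparison: the paper does not use the generating-function reformulation at all. It invokes the integral representation (from \cite{McleanMustapha:2014})
\[
\psi(z)=\frac{\sin(\pi(1-\al))}{\pi}\int_0^\infty\frac{s^{\al-1}}{1-e^{-z-s}}\,\frac{1-e^{-s}}{s}\,ds,
\]
splits $\Gamma_\tau$ into the two rays and the arc, and in each piece bounds $\Re\psi(z\tau)$ from below by a positive constant via direct estimates on the integrand. Your route --- exploiting positivity, monotonicity and convexity of the L1 weights $\{b_j\}$ --- is appealing because it ties the analytic bound to the algebraic structure of the scheme, and your small-$|\xi|$ argument is correct.

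The error is in the ``summation by parts twice'' identity. You claim $\widetilde b(w)=(1-w)^{-2}\sum_{j\ge0}c_jw^j$ with the \emph{forward} second differences $c_j=b_j-2b_{j+1}+b_{j+2}$. But multiplying out $(1-w)^2\widetilde b(w)$ gives the \emph{backward} differences:
\[
(1-w)^2\widetilde b(w)=b_0+(b_1-2b_0)w+\sum_{j\ge2}(b_{j-2}-2b_{j-1}+b_j)\,w^j,
\]
so the constant term is $b_0$, not $c_0=b_0-2b_1+b_2$, and the coefficient of $w$ is $b_1-2b_0<0$. (Equivalently, from $(1-w)\widetilde b(w)=b_0-w\widetilde d(w)$ with $d_j=b_j-b_{j+1}$ and $(1-w)\widetilde d(w)=d_0-w\widetilde c(w)$ one obtains $(1-w)^2\widetilde b(w)=b_0-(2b_0-b_1)w+w^2\widetilde c(w)$, not $\widetilde c(w)$.) Consequently your lower bound $|\widetilde b(w)|\ge(b_0-3b_1+2b_2)/4$ on $\{|w|\le1\}\setminus\{1\}$ does not follow; and the standard Enestr\"om--Kakeya argument from monotonicity alone only yields $|(1-w)\widetilde b(w)|\ge b_0-\sum_{j\ge1}(b_{j-1}-b_j)|w|^j$, which degenerates to $0$ as $|w|\to1$ and so gives no uniform bound to extend by continuity to $|w|$ slightly larger than $1$ --- which is exactly where $e^{-z\tau}$ lives on the rays of $\Gamma_\tau$.

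The strategy may be salvageable (for instance by bounding $\Re\widetilde b(w)$ on $|w|=1$ directly from convexity, in the spirit of a Fej\'er positivity argument, and only then extending by continuity), but as written the proof breaks precisely at the point you yourself flagged as the main obstacle.
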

\begin{proof}
Since for $z\in\Gamma_\tau$, $|\mathfrak{I}z|\le \pi/\tau$ and $z\notin(-\infty,0]$, by
\cite[Lemma 1]{McleanMustapha:2014}, there holds
\begin{equation*}
 \psi(z) = c_\alpha \int_0^\infty \frac{s^{\al-1}}{1-e^{-z-s}}\frac{1-e^{-s}}{s}\,ds,
\end{equation*}
with the constant $c_\alpha={\sin(\pi(1-\al))}/{\pi}$.
To prove the assertion, we again split the contour $\Gamma_\tau$ into $\Gamma=\Gamma_\tau^+\cup\Gamma_\tau^c\cup\Gamma_\tau^-$, where
$\Gamma_\tau^+$ and $\Gamma_\tau^-$ are the rays in the upper and lower half planes, respectively, and
$\Gamma_\tau^c$ is the circular arc of the contour $\Gamma_\tau$, and discuss the three cases separately.
We first consider the case $z\in\Gamma^+_\tau$ and set $z\tau=\rho e^{\mathfrak{i}\theta}=\rho\cos\theta+
\mathrm{i}\rho\sin\theta$ with $\delta<\rho<\pi/\sin\theta$. Upon letting $r=\rho\cos\theta$ and $\phi=\rho\sin\theta$ then
\begin{equation*}
\begin{split}
    \psi(z\tau)&=c_\alpha\int_0^\infty
    \frac{s^{\al-1}}{1-e^{-(r+\mathrm{i}\phi)-s}}\frac{1-e^{-s}}{s}\,ds\\
    &=c_\alpha\int_0^\infty \frac{s^{\al-2}(1-e^{-s})}
    {1-e^{-r-s}\cos\phi+\mathrm{i}e^{-r-s}\sin\phi}\,ds\\
    &=c_\alpha\int_0^\infty
    \frac{s^{\al-2}(1-e^{-s})(1-e^{-r-s}\cos\phi-\mathrm{i}e^{-r-s}\sin\phi)}
    {(1-e^{-r-s}\cos\phi)^2+e^{-2r-2s}\sin^2\phi}\,ds.
\end{split}
\end{equation*}
It suffices to show that the real part
\begin{equation*}
    \Re\psi(z\tau) = c_\alpha\int_0^\infty
    \frac{s^{\al-2}(1-e^{-s})(1-e^{-r-s}\cos\phi)}
    {1-2e^{-r-s}\cos\phi+e^{-2r-2s}}\,ds
\end{equation*}
is bounded from below by some positive constant $c$. First we consider the case  $\phi=\rho\sin\theta\in [\pi/2,\pi]$,
for which $\cos\phi\le0$ and thus
\begin{equation*}
  \begin{aligned}
    0 &< 1 - e^{-r-s}\cos\phi \leq 1-2e^{-r-s}\cos\phi \\
     & \leq 1-2e^{-r-s}\cos\phi+e^{-2r-2s}.
  \end{aligned}
\end{equation*}
Consequently,
\begin{equation*}
\begin{split}
    \Re\psi(z\tau) \ge c_\alpha \int_0^\infty {s^{\al-2}(1-e^{-s})}\,ds=c_0.
\end{split}
\end{equation*}
Next we consider the case $\phi\in(0,\pi/2)$, for which $\cos\phi>0$. Further we fix
$\theta=\pi/2$, and thus $r=\rho\cos\theta=0$ and $e^{-r}\cos\phi=\cos(\rho\sin\theta)
=\cos\rho>0$. Then
\begin{equation*}
   1 - e^{-r-s}\cos\phi > 1 - e^{-s}\quad \mbox{and}\quad
   0\leq 1 - 2e^{-r-s}\cos\phi + e^{-2r-2s} \leq 2,
\end{equation*}
and accordingly the real part $\Re\psi(z\tau)$ simplifies to
\begin{equation*}
\begin{split}
    \Re\psi(z\tau)  &= c_\alpha\int_0^\infty
    \frac{s^{\al-2}(1-e^{-s})(1-e^{-s}\cos\rho)}
    {1-2e^{-s}\cos\rho+e^{-2s}}\,ds\\
    &\ge \frac{c_\alpha}{2}\int_0^\infty s^{\al-2}(1-e^{-s})^2ds\ge c_1.
\end{split}
\end{equation*}
Then by continuity of $\Re\psi(z\tau)$, we may choose an angle $\theta\in(\pi/2,5\pi/6)$ such that
for any $z\in \Gamma^+_\tau$, there holds $ \Re\psi(z\tau) \ge c_2. $ Repeating the above
argument shows also the assertion for the case $z\in \Gamma^-_\tau$. It remains to show the
case $z\in \Gamma_\tau^c$. For any fixed $\rho\in(0,\pi/2)$ and $\theta \in [-\pi/2, \pi/2]$,
$\cos \phi =\cos (\rho\sin\theta)\ge 0 $, $ r = \rho \cos \theta \ge0$. Consequently
\begin{equation*}
  1-e^{-r-s}\cos \phi \geq 1 - e^{-s}\cos \phi \geq 1 - e^{-s},
\end{equation*}
and
\begin{equation*}
  1-2e^{-r-s}\cos\phi+e^{-2r-2s} \leq 1 + e^{-2r-2s} \leq 2.
\end{equation*}
These two inequalities directly imply
\begin{equation*}
\begin{split}
    \Re\psi(z\tau) &= c_\alpha\int_0^\infty
    \frac{s^{\al-2}(1-e^{-s})(1-e^{-r-s}\cos\phi)}
    {1-2e^{-r-s}\cos\phi+e^{-2r-2s}}\,ds\\
    &\ge \frac{c_\alpha}{2} \int_0^\infty
    {s^{\al-2}(1-e^{-s})^2} \,ds \ge c_3.
\end{split}
\end{equation*}
Then by continuity, we may choose an angle $\theta>\pi/2$ such that for $z\in \Gamma^c_\tau$, there holds
$\mathfrak{R}\psi(z\tau) \ge c_4>0.$
\end{proof}

The next result shows a ``sector-preserving'' property of the mapping $\chi_1(z)$: there
exists some $\theta_0<\pi$, such that $\chi_1(z)\in \Sigma_{\theta_0}$ for all $z\in \Sigma_\theta$.
This property plays a fundamental role in the error analysis below.
\begin{lemma}\label{lem:pre3}
Let $\psi(z)=\frac{e^{z}-1}{\Gamma(2-\al)}\pl_{\al-1}(e^{-z})$ and $\chi_1(z)=\frac{1-e^{-z\tau}
}{\tau^\al}\psi(z\tau)$. Then there exists some $\theta_0\in(\pi/2,\pi)$ such that $
\chi_1(z)\in\Sigma_{\theta_0}$ for all $z\in \Sigma_\theta.$
\end{lemma}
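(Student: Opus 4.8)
The plan is to reduce the statement to an assertion about the single map $w\mapsto\Phi(w):=(1-e^{-w})\psi(w)$ and then feed in the estimates already established. Putting $w=z\tau$, one has $\chi_1(z)=\tau^{-\al}\Phi(w)$, and since $\tau^{-\al}>0$ it suffices to show that $\arg\Phi(w)$ is bounded away from $\pm\pi$, uniformly, for $w$ ranging over the truncated sector $|\arg w|\le\theta$, $0<|w|\le\pi/\sin\theta$ (this is exactly the set swept out by $z\tau$ as $z$ runs over the contours $\G_\tau$, which is all that is ever integrated over in the error analysis). I would fix $\theta\in(\pi/2,5\pi/6)$ close enough to $\pi/2$ that Lemmas \ref{lem:pre1} and \ref{lem:pre2} both apply, and split according to whether $|w|\le\kappa$ or $|w|\ge\kappa$, for a small $\kappa>0$ to be chosen.

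For $|w|\le\kappa$ I would rescale Lemma \ref{lem:pre1}: the bound $|\chi_1(z)-z^\al|\le c|z|^2\tau^{2-\al}$ reads, in terms of $w$, as $|\Phi(w)-w^\al|\le c|w|^2$, so that $\Phi(w)=w^\al(1+O(|w|^{2-\al}))$. Choosing $\kappa$ so that $c\kappa^{2-\al}$ is small forces $\arg\Phi(w)$ to lie within a small angle $\phi_0$ of $\arg w^\al=\al\arg w\in[-\al\theta,\al\theta]$; since $\al<1$ and $\theta<\pi$ we have $\al\theta<\pi$, so on shrinking $\phi_0$ the value $\theta_1:=\al\theta+\phi_0$ is still $<\pi$, and $\Phi(w)\in\Sigma_{\theta_1}$ on this piece.

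For $\kappa\le|w|\le\pi/\sin\theta$, $|\arg w|\le\theta$, I would argue by compactness together with Lemma \ref{lem:pre2}. On this compact set $\Phi$ is continuous and non-vanishing: $1-e^{-w}=0$ only for $w\in2\pi\mathrm{i}\mathbb Z$, which is excluded since $0<|w|\le\pi/\sin\theta<2\pi$, while $\psi(w)\neq0$ because $\Re\psi(w)>0$ by (the proof of) Lemma \ref{lem:pre2}. Write $\arg\Phi(w)=\arg(1-e^{-w})+\arg\psi(w)$; then $|\arg\psi(w)|<\pi/2$ from positivity of the real part. For the first factor, at $\theta=\pi/2$ one has the explicit identity $1-e^{\mp\mathrm{i}\phi}=2\sin(\phi/2)\,e^{\pm\mathrm{i}(\pi-\phi)/2}$, hence $|\arg(1-e^{-w})|=(\pi-\phi)/2\le(\pi-\kappa)/2<\pi/2$ throughout the moderate range. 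Adding the two bounds gives $|\arg\Phi(w)|<\pi-\kappa/2<\pi$ for $\theta=\pi/2$, and by continuity in $\theta$ and compactness the same holds, with $\pi-\kappa/2$ replaced by some $\theta_2<\pi$, for all $\theta$ in a small neighbourhood of $\pi/2$. Then $\theta_0:=\max\{\theta_1,\theta_2\}$ lies in $(\pi/2,\pi)$ (since $\theta_2>\pi/2$) and $\chi_1(z)\in\Sigma_{\theta_0}$ everywhere.

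The delicate point I would expect to be the main obstacle is precisely the last step: the naive worst-case bounds $|\arg(1-e^{-w})|\le\pi/2$ and $|\arg\psi(w)|<\pi/2$ only add up to $\pi$, so a uniform gap is not available from these alone. The gain comes from two sources working together — excluding a neighbourhood of $w=0$ (which is handled separately via Lemma \ref{lem:pre1}), so that $\arg(1-e^{-w})$ is uniformly $<\pi/2$ on the remaining compact piece, and the \emph{strict} positivity $\Re\psi(w)>0$ from Lemma \ref{lem:pre2} rather than mere non-vanishing. One should also check that the transition from $\theta=\pi/2$ (where the identity for $1-e^{\mp\mathrm{i}\phi}$ and Lemma \ref{lem:pre2} are cleanest) to $\theta$ slightly larger is legitimate, but this is a routine continuity/compactness argument on a set where $\Phi$ stays away from $(-\infty,0]$.
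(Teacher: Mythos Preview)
Your approach is genuinely different from the paper's and is essentially sound, but it trades away a piece of information that the paper actually needs later.

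\textbf{What the paper does.} The paper never splits by $|w|$ or invokes Lemmas \ref{lem:pre1}--\ref{lem:pre2}. Instead it writes $\chi_1$ directly through the integral representation of $\psi$, obtains closed-form expressions for $\Re\chi_1$ and $\Im\chi_1$ (equations \eqref{real}--\eqref{imagine}), and runs a case analysis on the signs of $\cos\phi$ and $1-e^{-r}\cos\phi$. In the only case where $\Re\chi_1$ can be negative it bounds the ratio $|\Im\chi_1|/|\Re\chi_1|$ from below by the explicit function $g(\rho)=\sin(\rho\sin\theta)/(e^{-\rho\cos\theta}-\cos(\rho\sin\theta))$ and shows $\inf g = e^{\pi\cot\theta/2}>0$. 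This yields a \emph{quantitative} angle: as $\theta\to\pi/2$, $\theta_0$ can be taken arbitrarily close to $3\pi/4$ (Remark \ref{rem:sector}).

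\textbf{Where your argument is looser.} Two small points. First, the bound $|\arg(1-e^{-w})|\le(\pi-\kappa)/2$ is verified only for $w$ on the imaginary axis, not on the whole half-annulus $K_{\pi/2}$; in the interior $\Re w>0$ you only have $|\arg(1-e^{-w})|<\pi/2$. This is harmless --- compactness still gives $\max_{K_{\pi/2}}|\arg\Phi|<\pi$ --- but the specific value $\pi-\kappa/2$ is not justified. Second, you use $\Re\psi(w)>0$ on all of $K_{\pi/2}$, whereas Lemma \ref{lem:pre2} is stated only on the contour $\Gamma_\tau$; you would need the one-line observation that the integrand in the representation of $\Re\psi$ is positive whenever $\Re w\ge0$.

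\textbf{What each approach buys.} Your route is cleaner and more modular: it recycles Lemmas \ref{lem:pre1}--\ref{lem:pre2} and replaces the paper's case analysis by a small-$|w|$/compactness dichotomy. The cost is that compactness delivers only \emph{some} $\theta_0<\pi$, with no control on how close to $\pi$ it is. The paper's direct computation gives $\theta_0\to3\pi/4$, and this is exactly what is used in Section \ref{sec:space-frac}: the space-fractional operator is sectorial with angle $<\pi/4$, so the resolvent estimate holds only on $\Sigma_{\pi-\theta_{\mathrm{sec}}}$ with $\pi-\theta_{\mathrm{sec}}>3\pi/4$, and one needs $\chi_1(z)$ to land there. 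Your argument proves the lemma as stated but would not support Remark \ref{rem:sector} or the extension to Riemann--Liouville operators.
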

\begin{proof}
Like before, for $z\tau=\rho e^{\mathrm{i}\theta}$, we denote by $r=\rho\cos\theta$, $\phi=\rho\sin\theta$ and
$c_\alpha=\sin(\pi(1-\al))/\pi$. Then the real part $\Re\chi_1(z)$ and the imaginary part $\Im\chi_1(z)$ of the kernel
$\chi_1(z)$  are given by
\begin{equation}\label{real}
\begin{split}
    \Re\chi_1(z) = \frac{c_\alpha}{\tau^\al}\int_0^\infty
    \frac{s^{\al-2}(1-e^{-s})(1+e^{-2r-s}-e^{-r-s}\cos\phi-e^{-r}\cos\phi)}
    {1-2e^{-r-s}\cos\phi+e^{-2r-2s}}\,ds
\end{split}
\end{equation}
and
\begin{equation}\label{imagine}
\begin{split}
    \Im\chi_1(z) = \frac{c_\alpha}{\tau^\al}\int_0^\infty
    \frac{s^{\al-2}(1-e^{-s})^2 e^{-r}\sin\phi}
    {1-2e^{-r-s}\cos\phi+e^{-2r-2s}}\,ds,
\end{split}
\end{equation}
respectively. Obviously, for $\theta\in [-\pi/2,\pi/2]$ then $r=\rho\cos\theta\ge0$,
$0<e^{-r}\le 1$ and thus
\begin{equation*}
  \begin{aligned}
    1+e^{-2r-s}&-e^{-r-s}\cos\phi-e^{-r}\cos\phi  \geq
    1 + e^{-2r-s} - e^{-r-s} - e^{-r} \\
  = & (1-e^{-r-s})(1-e^{-r}) \geq (1-e^{-s})(1-e^{-r}).
  \end{aligned}
\end{equation*}
Meanwhile, $r\geq0$ implies
$0 \leq 1-2e^{-r-s}\cos\phi+e^{-2r-2s} \leq  4$, and consequently
\begin{equation*}
\begin{split}
    \Re\chi_1(z)& \ge \frac{c_\alpha(1-e^{-r})}{4\tau^\al}\int_0^\infty
    s^{\al-2}(1-e^{-s})^2\,ds = \frac{c_\alpha'}{\tau^\alpha}(1-e^{-r}),
\end{split}
\end{equation*}
with the constant $c_\alpha' = \frac{c_\alpha}{4}\int_0^\infty s^{\alpha-2}(1-e^{-s})^2ds$.

Next we consider the case $|\theta|>\pi/2$. It suffices to consider the case $\theta>\pi/2$,
and the other case $\theta<-\pi/2$ can be treated analogously. Let $z\tau=
\rho e^{\mathrm{i}\theta}$ with $\rho\in (0,\infty)$. First, clearly, for $\phi=\rho\sin\theta
\in[\pi/2,\pi]$, $\cos\phi\le0$, and there holds
\begin{equation*}
  0 < 1-2e^{-r-s}\cos\phi+e^{-2r-2s} \leq (1+e^{-r-s})^2,
\end{equation*}
and thus
\begin{equation*}
\begin{split}
    \Re\chi_1(z) \ge \frac{c_\alpha}{\tau^\al(1+e^{-r})^2} \int_0^\infty
    {s^{\al-2}(1-e^{-s})}\,ds>0.
\end{split}
\end{equation*}
Second, we consider the case $\phi=\rho\sin\theta \in(0,\pi/2)$. There are two possible
situations: (a) $1-e^{-r}\cos\phi\ge0$ and (b) $1-e^{-r}\cos\phi<0$. In case (a), we have
\begin{equation*}
  \begin{aligned}
   & 1+e^{-2r-s}-e^{-r-s}\cos\phi-e^{-r}\cos\phi  \\
   \geq& 1 + e^{-2r-s}\cos^2\phi - e^{-r-s}\cos\phi - e^{-r}\cos\phi \\
  = & (1-e^{-r-s}\cos\phi)(1-e^{-r}\cos\phi) \geq (1-e^{-s})(1-e^{-r}\cos\phi).
  \end{aligned}
\end{equation*}
Consequently,
\begin{equation*}
\begin{split}
    \Re\chi_1(z) \ge \frac{c_\alpha(1-e^{-r}\cos\phi)}{\tau^\al}\int_0^\infty
    \frac{s^{\al-2}(1-e^{-s})^2}{1-2e^{-r-s}\cos\phi+e^{-2r-2s}}\,ds \ge 0.
\end{split}
\end{equation*}
In case (b), we may further assume $\Re\chi_1(z)<0$, otherwise the statement follows directly. Then appealing to
\eqref{real} and using the trivial inequality $|\cos\phi|\leq1$, we deduce that $e^{-r}>1$ and
\begin{equation*}
1-e^{-r}\cos\phi <0 \quad\text{and}\quad
 e^{-2r}-e^{-r}\cos\phi>0.
\end{equation*}
With the help of these two inequalities, and the assumption $\Re\chi_1(z)<0$, we
arrive at
\begin{equation*}
  \begin{aligned}
    0 & > 1+e^{-2r-s}-e^{-r-s}\cos\phi-e^{-r}\cos\phi \\
      & = (1-e^{-r}\cos\phi) + e^{-s}(e^{-2r}-e^{-r}\cos\phi)\\
      & \geq (\cos\phi - e^{-r}) + e^{-s}(e^{-2r}-e^{-r}\cos\phi)\\
      & \geq e^{-r}(\cos\phi - e^{-r}) + e^{-s}(e^{-2r}-e^{-r}\cos\phi)\\
      & = (1-e^{-s})(e^{-r}\cos\phi-e^{-2r}),
  \end{aligned}
\end{equation*}
where the first and fourth inequalities follow from $\Re\chi_1(z)<0$ and $e^{-r}>1$, respectively.
Consequently,
\begin{equation*}
    |\mathfrak{R}\chi_1(z)| \leq \frac{c_1}{\tau^\al}(e^{-2r}-e^{-r}\cos\phi),
\end{equation*}
with the constant
$$c_1=c_1(r,\phi) = c_\alpha \int_0^\infty \frac{s^{\al-2}(1-e^{-s})^2}
    {1-2e^{-r-s}\cos\phi+e^{-2r-2s}}\,ds.$$
Meanwhile, it follows directly from \eqref{imagine} that
\begin{equation*}
    |\mathfrak{I}\chi_1(z)| = \frac{c_1}{\tau^\al}e^{-r}\sin\phi.
\end{equation*}
Therefore,
\begin{equation*}
   \frac{|\Im\chi_1(z)|}{|\Re\chi_1(z)|} \ge \frac{\sin(\rho\sin\theta) }{e^{-\rho\cos\theta}-\cos(\rho\sin\theta)}=:g(\rho).
\end{equation*}
Now set $g_1(\rho)=\sin(\rho\sin\theta)$ and $g_2(\rho)=e^{-\rho\cos\theta}-\cos(\rho\sin\theta)$.
Since for $\rho\in(0,\pi/(2\sin\theta))$ and $\theta>\pi/2$,
\begin{equation*}
 \lim_{\rho\to0}g(\rho)=- \tan\theta, \quad g_1(\rho),g_2(\rho)\ge0,\quad g_1'(\rho)\le0\quad \text{and}\quad g_2'(\rho)\ge0,
\end{equation*}
i.e., the function $g(\rho)$ is monotonically decreasing on the interval $[0,\pi/2\sin\theta]$, we deduce
\begin{equation*}
\inf_{\rho\in(0,\pi/(2\sin\theta))}g(\rho) = g(\pi/(2\sin\theta))=e^{\pi\cot\theta/2}>0.
\end{equation*}
This completes the proof of the lemma.
\end{proof}

\begin{remark}\label{rem:sector}
By the proof of Lemma \ref{lem:pre3}, the sector $\Sigma_{\theta_0}$ depends on the choice of the angle $\theta$.
For $\theta\to \pi/2$, it is contained in the sector $ \Sigma_{3\pi/4-\epsilon}$, for any $\epsilon>0$.
\end{remark}

\begin{lemma}\label{lem:errK}
Let $\theta$ be close to $\pi/2$, and $\delta<\pi/2\tau$. Then for $K_1(z)$ and $K_2(z)$
defined in \eqref{eqn:KB1} and \eqref{eqn:KB2}, respectively, there holds
\begin{equation*}
    \| K_1(z)-K_2(z) \| \le c \tau \quad \forall z\in \Gamma_\tau.
\end{equation*}
\end{lemma}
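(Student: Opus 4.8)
The plan is to reduce the claim, via the inequality \eqref{split2} already derived, to a bound on $\|B_1(z)-B_2(z)\|$. Indeed, \eqref{split2} gives $\|K_1(z)-K_2(z)\|\le c\tau+c|z|^{-1}\|B_1(z)-B_2(z)\|$ for $z\in\Gamma_\tau$, so it suffices to prove
\begin{equation*}
  \|B_1(z)-B_2(z)\|\le c|z|\tau \qquad\forall\,z\in\Gamma_\tau .
\end{equation*}

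To this end I would first put $B_1$ and $B_2$ in resolvent form. Writing $\chi_1(z)=\frac{1-e^{-z\tau}}{\tau^\al}\psi(z\tau)$ as in Lemma \ref{lem:pre3}, we have $B_1(z)=-(z^\al+A_h)^{-1}A_h=-I+z^\al(z^\al+A_h)^{-1}$ and likewise $B_2(z)=-I+\chi_1(z)(\chi_1(z)+A_h)^{-1}$, so after the identity terms cancel the resolvent identity yields
\begin{equation*}
  B_1(z)-B_2(z)=(z^\al-\chi_1(z))\,A_h(\chi_1(z)+A_h)^{-1}(z^\al+A_h)^{-1}.
\end{equation*}
Now each factor is controlled by one of the preceding results: by Lemma \ref{lem:pre1}, $|z^\al-\chi_1(z)|\le c|z|^2\tau^{2-\al}$ on $\Gamma_\tau$; since $A_h$ is positive self-adjoint and $z^\al\in\Sigma_{\al\theta}$ with $\al\theta<\pi$, the resolvent estimate \eqref{eqn:resol} applies to $A_h$ and gives $\|(z^\al+A_h)^{-1}\|\le c|z|^{-\al}$; and by Lemma \ref{lem:pre3}, $\chi_1(z)\in\Sigma_{\theta_0}$ for a fixed $\theta_0<\pi$, so $-\chi_1(z)$ stays off $[0,\infty)\supset\sigma(A_h)$, the operator $\chi_1(z)+A_h$ is invertible (Lemma \ref{lem:pre2} even gives $\chi_1(z)\ne0$), and $\|A_h(\chi_1(z)+A_h)^{-1}\|=\|I-\chi_1(z)(\chi_1(z)+A_h)^{-1}\|\le c$ with a constant depending only on $\theta_0$. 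Multiplying these bounds gives $\|B_1(z)-B_2(z)\|\le c\,|z|^{2-\al}\tau^{2-\al}=c\,(|z|\tau)^{2-\al}$, and since $|z|\tau\le\pi/\sin\theta$ is bounded on $\Gamma_\tau$ and $2-\al>1$, this is at most $c|z|\tau$. Feeding this back into \eqref{split2} finishes the proof.

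The only delicate point is the uniform bound on $\|A_h(\chi_1(z)+A_h)^{-1}\|$: one must know that $\chi_1(z)$ remains in a fixed sector $\Sigma_{\theta_0}$, $\theta_0<\pi$, uniformly in $\tau$ and in $z\in\Gamma_\tau$. This is exactly the content of Lemma \ref{lem:pre3}, and it is also the reason the hypotheses ``$\theta$ close to $\pi/2$'' and ``$\delta<\pi/2\tau$'' appear, since they make Lemmas \ref{lem:pre1}, \ref{lem:pre2} and \ref{lem:pre3} simultaneously available on $\Gamma_\tau$. Everything else is routine manipulation of resolvents together with the elementary fact that a power larger than $1$ of the bounded quantity $|z|\tau$ is dominated by $|z|\tau$ itself.
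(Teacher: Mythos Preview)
Your proof is correct and follows essentially the same route as the paper: both reduce to bounding $\|B_1(z)-B_2(z)\|$ via \eqref{split2}, write $B_i$ in the form $-I+\,(\cdot)\,(\,\cdot+A_h)^{-1}$, and combine Lemma~\ref{lem:pre1} with the sectoriality of $\chi_1$ from Lemma~\ref{lem:pre3} and the resolvent estimate. The only cosmetic difference is that you factor $B_1-B_2=(z^\al-\chi_1)A_h(\chi_1+A_h)^{-1}(z^\al+A_h)^{-1}$ and bound $\|A_h(\chi_1+A_h)^{-1}\|\le c$ directly, whereas the paper bounds $\|(\chi_1+A_h)^{-1}\|\le c|\chi_1|^{-1}$ and then invokes the explicit lower bound $|\chi_1(z)|\ge c|z|\tau^{1-\al}$ coming from Lemmas~\ref{lem:para_equiv1} and~\ref{lem:pre2}; both give $\|B_1-B_2\|\le c|z|\tau$.
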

\begin{proof}
For the operators $B_1(z)$ and $B_2(z)$ defined in \eqref{eqn:KB1} and \eqref{eqn:KB2}, respectively,
we have
\begin{equation*}
  B_1(z)=-I+ z^\al(z^\al+A_h)^{-1}\quad\mbox{and}\quad
  B_2(z)=-I + \chi_1(z)(\chi_1(z)+A_h)^{-1},
\end{equation*}
where $\chi_1(z)=\frac{1-e^{-z\tau}}{\tau^\al}\psi(z\tau)$.
Then by Lemma \ref{lem:pre1}
\begin{equation*}
\begin{split}
 \|  B_1(z)-B_2(z)\|&\leq |\chi_1(z)-z^\al| \|(\chi_1(z)+A_h)^{-1}\| + |z|^\al \|(\chi_1(z)+A_h)^{-1}-(z^\al+A_h)^{-1}\|\\
 & \le  c|z|^{2} \tau^{2-\al} \|(\chi_1(z)+A_h)^{-1}\|+|z|^\al |\chi_1(z)-z^\al|\|(\chi_1(z)+A_h)^{-1}(z^\al+A_h)^{-1}\|\\
 &\le c |z|^2\tau^{2-\al} \|(\chi_1(z)+A_h)^{-1}\|.
\end{split}
\end{equation*}
Now we note that
\begin{equation*}
 |\chi_1(z)|=|\chi(z)|\tau^{1-\al} |\psi(z\tau)| \ge c|z|\tau^{1-\al}.
\end{equation*}
By Lemma \ref{lem:pre3}, $\chi(z) \in \Sigma_{\theta_0}$ for some $\theta_0\in (\pi/2,\pi)$.
Thus by Lemma \ref{lem:pre2} and the resolvent estimate \eqref{eqn:resol}, we have
\begin{equation*}
\begin{split}
 \|  K_1(z)-K_2(z)\|&\le c|z|^{-1}|z|^2\tau^{2-\al} |\chi_1(z)|^{-1} + c\tau \le c\tau,
\end{split}
\end{equation*}
and the desired estimate follows immediately.
\end{proof}

Now we can state an error estimate for the discretization error in time for nonsmooth initial data,
i.e., $v\in L^2(\Omega)$.
\begin{theorem}\label{thm:error-nonsmooth}
Let $u_h$ and $U_h^n$ be the solutions of problems \eqref{eqn:fdesemidis} and \eqref{eqn:fully} with
$v\in L^2(\Omega)$, $U_h^0= v_h = P_hv$ and $f\equiv0$, respectively. Then there holds
\begin{equation*}
   \| u_h(t_n)-U_h^n \|_{L^2(\Om)} \le c \tau t_n^{-1}  \| v\|_{L^2\II}.
\end{equation*}
\end{theorem}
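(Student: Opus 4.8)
\noindent\emph{Proof proposal.} The plan is to estimate the two contour integrals in the decomposition $u_h(t_n)-U_h^n = w_h(t_n)-W_h^n = I+II$ directly, exploiting the kernel bound already in hand. Since $U_h^n = W_h^n + v_h$ and $u_h = w_h + v_h$, the first identity is immediate, and $\|v_h\|_{L^2\II} = \|P_h v\|_{L^2\II}\le\|v\|_{L^2\II}$. Combining \eqref{split1}, Lemma \ref{lem:errK} and the resolvent bound $\|K_1(z)\|\le c|z|^{-1}$ that follows from \eqref{eqn:resol}, we already have $\|K_1(z)-e^{-z\tau}K_2(z)\|\le c\tau$ for all $z\in\Gamma_\tau$, so the whole argument reduces to two routine contour estimates. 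The device is to take the contour radius $\delta = 1/t_n$. This is admissible for every $n\ge1$ when $\theta$ is close to $\pi/2$: indeed $\delta = 1/(n\tau)\le 1/\tau < \pi/(2\tau)$, which is exactly the hypothesis required by Lemmas \ref{lem:pre2}--\ref{lem:errK}, and moreover $\delta\le\pi/(\tau\sin\theta)$, so the circular arc of $\Gamma_{\theta,\delta}$ lies inside $\Gamma_\tau$.

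For $II$ I would split $\Gamma_\tau$ into its circular arc $\{|z|=\delta\}$ and the two ray segments $\{z=\rho e^{\pm i\theta}:\delta\le\rho\le\pi/(\tau\sin\theta)\}$. On the rays $|e^{zt_n}| = e^{\rho t_n\cos\theta}$ with $\cos\theta<0$, hence $\int_{\mathrm{rays}}|e^{zt_n}|\,|dz|\le 2\int_0^\infty e^{-\rho t_n|\cos\theta|}\,d\rho = c\,t_n^{-1}$; on the arc $|e^{zt_n}|\le e^{\delta t_n}=e$ while its length is at most $2\pi\delta = 2\pi/t_n$. Thus $\int_{\Gamma_\tau}|e^{zt_n}|\,|dz|\le c\,t_n^{-1}$, and together with $\|K_1(z)-e^{-z\tau}K_2(z)\|\le c\tau$ and $\|v_h\|_{L^2\II}\le\|v\|_{L^2\II}$ this gives $\|II\|_{L^2\II}\le c\tau t_n^{-1}\|v\|_{L^2\II}$.

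For $I$ the contour $\Gamma_{\theta,\delta}\setminus\Gamma_\tau$ is just the pair of ray tails with $\rho\ge\pi/(\tau\sin\theta)\ge\pi/\tau$, on which $\|K_1(z)\|\le c|z|^{-1} = c\rho^{-1}\le c\tau$; therefore $\|I\|_{L^2\II}\le c\tau\|v\|_{L^2\II}\int_{\pi/(\tau\sin\theta)}^\infty e^{-\rho t_n|\cos\theta|}\,d\rho\le c\tau t_n^{-1}\|v\|_{L^2\II}$. (Retaining the $|z|^{-1}$ factor inside the integral would in fact yield the sharper, exponentially small tail $c\tau\,e^{-ct_n/\tau}$, but the stated rate already suffices.) Adding the two bounds gives $\|u_h(t_n)-U_h^n\|_{L^2\II}\le\|I\|_{L^2\II}+\|II\|_{L^2\II}\le c\tau t_n^{-1}\|v\|_{L^2\II}$, which is the claim. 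The genuinely hard work --- the kernel estimate $\|K_1(z)-K_2(z)\|\le c\tau$ on $\Gamma_\tau$, resting on the sector-preserving property of $\chi_1$ (Lemma \ref{lem:pre3}) and the lower bound $|\psi|\ge c$ (Lemma \ref{lem:pre2}) --- is already done; the only point here needing any care, and hence the ``main obstacle'', is verifying that the balancing choice $\delta = 1/t_n$ respects the constraint $\delta<\pi/(2\tau)$ for every $n\ge1$, which it does since $n>2/\pi$.
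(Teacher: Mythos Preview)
Your argument is correct and follows essentially the same route as the paper: the same decomposition $w_h(t_n)-W_h^n=I+II$, the same choice $\delta=1/t_n$, the kernel bound $\|K_1-e^{-z\tau}K_2\|\le c\tau$ from \eqref{split1} and Lemma~\ref{lem:errK} for the $\Gamma_\tau$ piece, and the resolvent bound $\|K_1(z)\|\le c|z|^{-1}$ for the ray tails. Your check that $\delta=1/t_n<\pi/(2\tau)$ for all $n\ge1$ is the one point the paper leaves implicit; otherwise the two proofs are the same (indeed the paper's displayed bounds \eqref{eqn:errI}--\eqref{eqn:errII} have the labels $I$ and $II$ interchanged relative to their definitions, while your version keeps them straight).
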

\begin{proof}
It suffices to bound the terms $I$ and $II$.
With the choice $\delta=t_n^{-1}$ and \eqref{eqn:resol}, we arrive at the following bound for the term $I$
\begin{equation}\label{eqn:errI}
\begin{split}
    \| I \|_{L^2\II}
    &\le c \tau \| v_h\|_{L^2\II} \left(\int_{1/t_n}^{\pi/(\tau\sin\theta)}  e^{r t_n\cos\theta}   \,dr +
    \int_{-\theta}^{\theta} e^{\cos\psi} t_n^{-1}\,d\psi\right) \\
    & \le c t_n^{-1}\tau \|v_h\|_{L^2\II}.
\end{split}
\end{equation}
By Lemma \ref{lem:errK} and direct calculation, we bound the term $II$ by
\begin{equation}\label{eqn:errII}
\begin{split}
    \| II \|_{L^2\II} &\le c \int_{\pi/(\tau\sin\theta)}^\infty  e^{rt_n\cos\theta } r^{-1} \,dr \| v_h\|_{L^2\II}\\
    & \le c\tau \|v_h\|_{L^2\II} \int_{0}^\infty  e^{rt_n\cos\theta} \,dr \le c\tau t_n^{-1} \| v_h\|_{L^2\II}.
\end{split}
\end{equation}
Combining estimates \eqref{eqn:errI} and \eqref{eqn:errII} yields
$ 
   \| w_h(t_n)-W_h^n \|_{L^2(\Om)} \le c \tau t_n^{-1}  \|v_h\|_{L^2\II}
$ 
and the desired result follows from the identity $U_h^n-u_h(t_n)=W_h^n-w_h(t_n)$
and the stability of the projection $P_h$ in $L^2(\Omega)$.
\end{proof}

\begin{remark}\label{rem:stab}
The $L^2(\Omega)$ stability of the L1 scheme follows directly from Theorem \ref{thm:error-nonsmooth}.
\end{remark}

Next we turn to smooth initial data, i.e., $v\in D(A)=H^2\II\cap H_0^1\II$. To this end, we first
state an alternative estimate on the solution kernels.

\begin{lemma}\label{lem:errK2}
Let $\theta$ be close to $\pi/2$, and $\delta<\pi/2\tau$. Further,
let  $K_1^s(z)=-z^{-1}(z^\al+A_h)^{-1}$ and $K_2^s(z)=-\chi(z)^{-1}(\chi_1(z)+A_h)^{-1}$.
Then for any $z\in \Sigma_{\delta,\theta}$, there hold
\begin{equation*}
\| K_1^s(z)-K_2^s(z) \|  \le  c|z|^{-\al}\tau.
\end{equation*}
\end{lemma}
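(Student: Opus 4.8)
The plan is to split the difference $K_1^s(z)-K_2^s(z)$ into two pieces, exactly as was done in \eqref{split2} for $K_1-K_2$, isolating first the discrepancy between the scalar prefactors $z^{-1}$ and $\chi(z)^{-1}$, and then the discrepancy between the two resolvents $(z^\al+A_h)^{-1}$ and $(\chi_1(z)+A_h)^{-1}$. Concretely, I would write
\begin{equation*}
\begin{split}
  \|K_1^s(z)-K_2^s(z)\| &\le |z^{-1}-\chi(z)^{-1}|\,\|(z^\al+A_h)^{-1}\| + |\chi(z)|^{-1}\,\|(z^\al+A_h)^{-1}-(\chi_1(z)+A_h)^{-1}\|\\
   &\le \frac{|z-\chi(z)|}{|z\chi(z)|}\,\|(z^\al+A_h)^{-1}\| + |\chi(z)|^{-1}|\chi_1(z)-z^\al|\,\|(z^\al+A_h)^{-1}(\chi_1(z)+A_h)^{-1}\|.
\end{split}
\end{equation*}
For the first term I invoke Lemma \ref{lem:para_equiv1} ($|z-\chi(z)|\le c|z|^2\tau$ and $c_1|z|\le|\chi(z)|\le c_2|z|$) together with the resolvent bound \eqref{eqn:resol}, giving a contribution of size $c\,(|z|^2\tau/|z|^2)\cdot|z|^{-\al}=c|z|^{-\al}\tau$. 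This already matches the target rate.

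For the second term, the key ingredients are: Lemma \ref{lem:pre1}, which gives $|\chi_1(z)-z^\al|\le c|z|^2\tau^{2-\al}$; the resolvent estimate \eqref{eqn:resol} applied to $(z^\al+A_h)^{-1}$, giving $\|(z^\al+A_h)^{-1}\|\le c|z|^{-\al}$; and a resolvent bound for $(\chi_1(z)+A_h)^{-1}$. For the latter I rely on Lemma \ref{lem:pre3} (the sector-preserving property: $\chi_1(z)\in\Sigma_{\theta_0}$ for some $\theta_0<\pi$) combined with Lemma \ref{lem:pre2} ($|\psi(z\tau)|\ge c$), which together with $|\chi_1(z)|=|\chi(z)|\tau^{1-\al}|\psi(z\tau)|\ge c|z|\tau^{1-\al}$ yield $\|(\chi_1(z)+A_h)^{-1}\|\le c|\chi_1(z)|^{-1}\le c|z|^{-1}\tau^{\al-1}$. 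Assembling: $|\chi(z)|^{-1}\cdot|\chi_1(z)-z^\al|\cdot\|(z^\al+A_h)^{-1}\|\cdot\|(\chi_1(z)+A_h)^{-1}\| \le c\,|z|^{-1}\cdot|z|^2\tau^{2-\al}\cdot|z|^{-\al}\cdot|z|^{-1}\tau^{\al-1} = c|z|^{-\al}\tau$, which is again the desired bound. Summing the two contributions finishes the proof.

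The mild subtlety — and the only place one must be a little careful — is that the lemma is stated for all $z\in\Sigma_{\delta,\theta}$ rather than only on the truncated contour $\Gamma_\tau$, whereas Lemmas \ref{lem:para_equiv1}, \ref{lem:pre1} and \ref{lem:pre2} are phrased on $\Gamma_\tau$ (equivalently, under $|z\tau|\le\pi/\sin\theta$). I would note that on $\Gamma_{\delta,\theta}$ one still has $|\Im(z\tau)|\le\pi/\sin\theta$ is \emph{not} automatic, so in practice this estimate is used only for $z$ with $|z|\tau$ bounded — i.e. effectively on $\Gamma_\tau$ — and I would either restrict the statement accordingly or remark that the same Taylor-expansion arguments give the bounds whenever $|z\tau|\le\pi/\sin\theta$, which is the regime in which $K_2^s$ arises in the error analysis. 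Modulo this bookkeeping, the proof is a routine assembly of the preceding lemmas; the main obstacle was really front-loaded into Lemma \ref{lem:pre3}, and here we merely reap its consequences. I would also record, as in Lemma \ref{lem:errK}, that the trivial bound $\|B_1\|,\|B_2\|\le c$ is not needed in this ``smooth data'' version precisely because the extra factor $|z|^{-\al}$ we are allowed to keep comes from not having cancelled the resolvent against $A_h$.
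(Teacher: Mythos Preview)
Your proof is correct and follows essentially the same route as the paper: the paper introduces $B_1^s(z)=-(z^\al+A_h)^{-1}$, $B_2^s(z)=-(\chi_1(z)+A_h)^{-1}$, bounds $\|B_1^s-B_2^s\|\le c|z|^{1-\al}\tau$ via the resolvent identity and Lemmas \ref{lem:pre1}--\ref{lem:pre3}, and then invokes the split analogous to \eqref{split2} to pass to $K_1^s-K_2^s$---exactly the decomposition and ingredients you spell out. Your remark on the domain ($\Gamma_\tau$ versus $\Sigma_{\delta,\theta}$) is a fair bookkeeping point that the paper glosses over; in the subsequent application (Theorem \ref{thm:error-smooth}) the estimate is indeed only used on $\Gamma_\tau$.
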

\begin{proof}
Like before, we set $B_1^s(z)=-(z^\al+A_h)^{-1}$ and $B^s_2(z)=-(\chi_1(z)+A_h)^{-1}$.
Then by Lemmas \ref{lem:pre1} and \ref{lem:pre2}, we have
\begin{equation*}
\begin{split}
\| B_1^s(z)-B_2^s(z) \|  &\le  |\chi_1(z)-z^\al| \|  (z^\al+A_h)^{-1} \|   \|  (\chi_1(z)+A_h)^{-1} \| \\
  &\le c |z|^2\tau^{2-\al}|z|^{-\al} |\chi_1(z)|^{-1}\le c|z|^{1-\al}\tau.
\end{split}
\end{equation*}
The rest follows analogously to the derivation \eqref{split1}.
\end{proof}

Now we can state an error estimate for $Av \in L^2\II$.
\begin{theorem}\label{thm:error-smooth}
Let $u_h$ and $U_h^n$ be the solutions of problems \eqref{eqn:fdesemidis} and \eqref{eqn:fully} with
$v\in \dH 2$, $U_h^0= v_h=R_hv$ and $f\equiv0$, respectively. Then there holds
\begin{equation*}
   \| u_h(t_n)-U_h^n \|_{L^2(\Om)} \le c \tau t_n^{\al-1} \| Av \|_{L^2\II}.
\end{equation*}
\end{theorem}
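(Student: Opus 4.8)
The plan is to mirror the structure of the proof of Theorem~\ref{thm:error-nonsmooth}, but with the smooth-data solution kernels $K_1^s$ and $K_2^s$ in place of $K_1$ and $K_2$, exploiting the extra smoothing from one additional power of $A_h$ applied to the data. First I would rewrite the semidiscrete and fully discrete representations for smooth data: since $v\in\dH2$ with $v_h=R_hv$, one can absorb one copy of $A_h$ into the data, writing the error as a contour integral of $(K_1^s(z)-e^{-z\tau}K_2^s(z))A_hv_h$ over $\Gamma_{\theta,\delta}$ (plus the tail piece over $\Gamma_{\theta,\delta}\setminus\Gamma_\tau$), and note that $\|A_hR_hv\|_{L^2\II}\le c\|Av\|_{L^2\II}$ by the stability of the Ritz projection in the relevant sense, so it suffices to estimate the operator norms of the kernels and integrate.

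Next I would split $w_h(t_n)-W_h^n$ into the analogues of $I$ and $II$ as in Section~\ref{sec:err}, but now carrying $A_hv_h$ rather than $v_h$. For the tail term $I$, using the resolvent bound \eqref{eqn:resol} one has $\|K_1^s(z)A_h\|=|z|^{-1}\|A_h(z^\al+A_h)^{-1}\|\le c|z|^{-1}$, so with the choice $\delta=t_n^{-1}$,
\begin{equation*}
  \|I\|_{L^2\II}\le c\|A_hv_h\|_{L^2\II}\Bigl(\int_{1/t_n}^{\pi/(\tau\sin\theta)} e^{rt_n\cos\theta}r^{-1}\,dr + t_n^{-1}\int_{-\theta}^{\theta}e^{\cos\psi}\,d\psi\Bigr)\le c\ln(t_n/\tau)\,\|Av\|_{L^2\II};
\end{equation*}
to remove the logarithm and recover the clean $\tau t_n^{\al-1}$ rate one should instead balance the radial integral against the decay, using $e^{rt_n\cos\theta}\le c (rt_n)^{-(1-\al)}$ on the relevant range, which produces $\int_{1/t_n}^{\infty} r^{-1}(rt_n)^{-(1-\al)}\,dr\le ct_n^{-1}\cdot t_n^{-(1-\al)}\cdot t_n = ct_n^{\al-1}$ after the change of variable $r\mapsto rt_n$ — wait, more carefully one keeps the factor $\tau$ from the difference estimate, so the cleanest route is to bound $I$ together with $II$. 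For $II$, Lemma~\ref{lem:errK2} gives $\|K_1^s(z)-K_2^s(z)\|\le c|z|^{-\al}\tau$, and combined with $\|K_1^s(z)\|\le c|z|^{-1}$ (so $\|K_1^s(z)-e^{-z\tau}K_2^s(z)\|\le c|z|^{-\al}\tau$ via the argument of \eqref{split1}, noting $|1-e^{-z\tau}|\,|z|^{-1}\le c\tau$), one obtains
\begin{equation*}
  \|II\|_{L^2\II}\le c\tau\|A_hv_h\|_{L^2\II}\int_{\Gamma_\tau}|z|^{-\al}\,|e^{zt_n}|\,|dz|\le c\tau\|Av\|_{L^2\II}\int_0^\infty e^{rt_n\cos\theta}r^{-\al}\,dr\le c\tau t_n^{\al-1}\|Av\|_{L^2\II},
\end{equation*}
where the last step is the standard Gamma-function integral $\int_0^\infty e^{-crt_n}r^{-\al}\,dr = c't_n^{\al-1}$ (valid since $\cos\theta<0$ and $\al\in(0,1)$). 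The tail term $I$ is handled analogously: with $\|K_1^s(z)A_h\|\le c|z|^{-1}\le c\tau\cdot|z|^{-1}\cdot\tau^{-1}$ on $\Gamma_{\theta,\delta}\setminus\Gamma_\tau$ where $|z|\ge \pi/(\tau\sin\theta)$, one gets the extra factor $\tau$ for free, reducing $\|I\|$ to $c\tau\int_{\pi/(\tau\sin\theta)}^\infty e^{rt_n\cos\theta}|z|^{-1}\cdot(|z|\tau/\pi\cdot\sin\theta)\,dr$; more simply $|z|^{-1}\le c\tau$ there times... the cleanest statement is $\|I\|\le c\int_{\pi/(\tau\sin\theta)}^\infty e^{rt_n\cos\theta}r^{-1}\,dr\,\|Av\|\le c\tau\int_0^\infty e^{rt_n\cos\theta}\,dr\,\|Av\|\le c\tau t_n^{-1}\|Av\|\le c\tau t_n^{\al-1}\|Av\|$ for $t_n\le T$.

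The main obstacle I anticipate is bookkeeping the powers of $|z|$ correctly so that the singular integrals near $z=\delta=t_n^{-1}$ converge and yield exactly $t_n^{\al-1}$ rather than $t_n^{-1}$ or a logarithm: the gain of $|z|^{-\al}$ (as opposed to $|z|^{-1}$ in the nonsmooth case) in Lemma~\ref{lem:errK2} is precisely what converts $\int_0^\infty e^{rt_n\cos\theta}\,dr\sim t_n^{-1}$ into $\int_0^\infty e^{rt_n\cos\theta}r^{-\al}\,dr\sim t_n^{\al-1}$, and one must make sure the $\delta=t_n^{-1}$ choice is compatible with the hypothesis $\delta<\pi/2\tau$ (which holds once $\tau<\pi t_n/2$, i.e. for $N$ not too small, and the finitely many remaining cases are absorbed into the constant). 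Finally I would conclude via $U_h^n-u_h(t_n)=W_h^n-w_h(t_n)$ together with $\|A_hR_hv\|_{L^2\II}\le c\|Av\|_{L^2\II}$, completing the proof.
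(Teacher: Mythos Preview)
Your overall strategy---rewriting the error in terms of the smooth-data kernels $K_1^s$, $K_2^s$ acting on $A_hv_h$, splitting into $I$ (tail) and $II$ (main contour), and invoking Lemma~\ref{lem:errK2}---is exactly the paper's approach. But there is a genuine gap in your treatment of the tail term $I$, and a related slip in $II$.

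For $I$ you bound $\|K_1^s(z)A_h\|=|z|^{-1}\|A_h(z^\al+A_h)^{-1}\|\le c|z|^{-1}$. This is precisely the \emph{nonsmooth} kernel estimate $\|K_1(z)\|\le c|z|^{-1}$; by absorbing $A_h$ back into the resolvent you throw away the gain from smooth data. Your integral then produces only $c\tau t_n^{-1}$, and your closing inequality ``$\tau t_n^{-1}\le c\tau t_n^{\al-1}$ for $t_n\le T$'' is false: since $\al-1>-1$, one has $t_n^{-1}/t_n^{\al-1}=t_n^{-\al}\to\infty$ as $t_n\to0$, so no such bound holds. The fix, which is what the paper does, is to keep $A_hv_h$ as data and use the resolvent estimate \eqref{eqn:resol} directly on $K_1^s$:
\[
\|K_1^s(z)\|=|z|^{-1}\|(z^\al+A_h)^{-1}\|\le c|z|^{-1-\al}.
\]
On $\Gamma_{\theta,\delta}\setminus\Gamma_\tau$ one has $r\ge \pi/(\tau\sin\theta)$, so pulling out one factor $r^{-1}\le c\tau$ gives
\[
\|I\|_{L^2\II}\le c\|A_hv_h\|_{L^2\II}\int_{\pi/(\tau\sin\theta)}^\infty e^{rt_n\cos\theta}r^{-\al-1}\,dr
\le c\tau\|A_hv_h\|_{L^2\II}\int_0^\infty e^{rt_n\cos\theta}r^{-\al}\,dr\le c\tau t_n^{\al-1}\|A_hv_h\|_{L^2\II},
\]
which is the required rate. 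The same sharper bound $\|K_1^s(z)\|\le c|z|^{-1-\al}$ is also what makes your \eqref{split1}-type argument for $II$ actually deliver $\|K_1^s(z)-e^{-z\tau}K_2^s(z)\|\le c|z|^{-\al}\tau$: with only $\|K_1^s\|\le c|z|^{-1}$ the cross term $|1-e^{-z\tau}|\,\|K_1^s\|$ contributes $c\tau$, not $c|z|^{-\al}\tau$, and you would again lose the $t_n^{\al-1}$ factor. Finally, for the data term the paper uses the clean identity $A_hR_h=P_hA$, which gives $\|A_hR_hv\|_{L^2\II}=\|P_hAv\|_{L^2\II}\le\|Av\|_{L^2\II}$ directly.
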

\begin{proof}
Let  $K_1^s(z)=-z^{-1}(z^\al+A_h)^{-1}$ and $K_2^s(z)=-\chi(z)^{-1}(\chi_1(z)+A_h)^{-1}$.
Then we can rewrite the error as
\begin{equation}\label{eqn:Vsplit2}
\begin{split}
   w_h(t_n) - W_h^n=&\frac{1}{2\pi\mathrm{i}}\int_{\Gamma_{\theta,\delta}\backslash\Gamma_\tau} e^{zt_n} K_1^s(z)A_hv_hdz\\
&+\frac{1}{2\pi\mathrm{i}}\int_{\Gamma_\tau} e^{zt_n} (K^s_1(z)-e^{-z\tau} K^s_2(z))A_hv_hdz = I+II.
\end{split}
\end{equation}
By Lemma \ref{lem:errK2} we have for $z\in\G_\tau$
\begin{equation*}
  \| K^s_1(z)-e^{-z\tau} K^s_2(z) \| \le c|z|^{-\al}\tau.
\end{equation*}
By setting $\delta=1/t_n$ and for all $z\in \G_{\delta,\theta}$,
we derive the following bound for the term $II$
\begin{equation}\label{eqn:errIsmooth}
\begin{split}
    \| II \|_{L^2\II} &\le c \tau \| A_hv_h \|_{L^2\II}\left(\int_{1/t_n}^{\pi/(\tau\sin\theta)}  e^{r t_n\cos\theta} r^{-\al}dr
     + \int_{-\theta}^{\theta}  e^{\cos\psi} t_n^{\al-1}d\psi \right)\\
     &\le ct_n^{\al-1}\tau \|A_hv_h\|_{L^2\II}.
\end{split}
\end{equation}
Now \eqref{eqn:resol} implies that
for all $z\in \G_{\delta,\theta}$
\begin{equation}\label{eqn:errIIsmooth}
\begin{split}
    \| I \|_{L^2\II} &\le c \| A_hv_h \|_{L^2\II} \int_{\pi/(\tau\sin\theta)}^\infty  e^{r t_n\cos\theta} r^{-\al-1} \,dr\\
    &\le c \tau \| A_hv_h \|_{L^2\II} \int_{0}^\infty  e^{r t_n\cos\theta} r^{-\al} \,dr
     \le  c\tau t_n^{\al-1} \| A_hv_h \|_{L^2\II}.
\end{split}
\end{equation}
Then the desired result follows directly from \eqref{eqn:errIsmooth}, \eqref{eqn:errIIsmooth} and the
identities $U_h^n-u_h(t_n)=W^n-w_h(t_n)$ and $A_hR_h=P_hA$.
\end{proof}

\begin{remark}\label{rem:singularity}
The convergence behavior of the L1 scheme is identical with that for the convolution quadrature generated
by the backward Euler method, which also converges at an $O(\tau)$ rate \cite{JinLazarovZhou:2014a}.
In particular for smooth initial data $v\in D(A)$, the time discretization error by both schemes
contains a singularity $t_n^{\alpha-1}$. This singularity reflects the limited smoothing property
of the solution $u$ \cite[Theorem 2.1]{SakamotoYamamoto:2011}
\begin{equation*}
  \|\Dal u(t)\|_{L^2(\Omega)}\leq c\|Av\|_{L^2(\Omega)},
\end{equation*}
whereas the first order derivative $u'(t)$ is unbounded at $t=0$.
\end{remark}

\begin{example}
To illustrate the convergence rate in Theorem \ref{thm:error-smooth}, we give a trivial example.
Consider the following initial value problem for the fractional ordinary differential equation:
\begin{equation*}
   \Dal u + u = 0, ~~\forall t>0 , \quad \text{with} ~~ u(0)=1.
\end{equation*}
The exact solution $u$ at $t=\tau$ is given by $u(\tau)=E_{\al,1}(-\tau^\al)$, where $E_{\al,1}(z)=
\sum_{k=0}^\infty z^k/\Gamma(\alpha k +1)$ is the Mittag-Leffler function. For small $\tau$, the L1 scheme at the
first step is given by
\begin{equation*}
  U^1=(1+\Gamma(2-\al)\tau^\al)^{-1}= 1+ \sum_{n=1}^\infty (-1)^n (\Gamma(2-\al)\tau^\al)^n.
\end{equation*}
Then the difference between $U^1$ and $u(\tau)$ is given by
\begin{equation*}
   u(\tau)-U^1 = ( \Gamma(2-\al)-\Gamma(\al+1)^{-1} ) \tau^{\al} + c_\tau\tau^{2\al},
\end{equation*}
with $c_\tau = \sum_{n=2}^{\infty} (-1)^n (\Gamma(n\al+1)^{-1}-\Gamma(2-\al)^{n})\tau^{(n-2)\al} $.
Since $|c_\tau| \le c_0$ for small $\tau$, 
we deduce that
\begin{equation*}
  | u(\tau)-U^1 | \sim \tau^{\al} = t_1^{\al-1} \tau.
\end{equation*}
This confirms the convergence order in Theorem \ref{thm:error-smooth}.
\end{example}

Last, the error estimates for the fully discrete scheme \eqref{eqn:fully}
follow from Theorems \ref{thm:error-semi}, \ref{thm:error-nonsmooth} and
\ref{thm:error-smooth} and the triangle inequality.

\begin{theorem}\label{thm:error_fully}
Let $u$ and $U_h^n$ be the solutions of problems \eqref{eqn:fde} and \eqref{eqn:fully} with
$U_h^0= v_h$ and $f\equiv0$, respectively. Then the following estimates hold.
\begin{itemize}
  \item[(a)] If $v\in D(A)$ and $v_h=R_h v$, then for $n \ge 1$
  \begin{equation*}
   \| u(t_n)-U_h^n \|_{L^2(\Om)} \le c(\tau t_n^{\al-1} + h^2)  \| Av \|_{L^2\II}.
  \end{equation*}
  \item[(b)] If $v\in L^2(\Omega)$ and $v_h=P_hv$, then for $n \ge 1$
  \begin{equation*}
   \| u(t_n)-U_h^n \|_{L^2(\Om)} \le c (\tau t_n^{-1} + h^2t_n^{-\al})  \| v\|_{L^2\II}.
  \end{equation*}
\end{itemize}
\end{theorem}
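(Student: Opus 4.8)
The proof of Theorem \ref{thm:error_fully} is almost immediate once the semidiscrete and time-discretization errors are in hand, so the plan is essentially a triangle inequality combined with a careful interpolation argument for the low-regularity case. First I would introduce the semidiscrete solution $u_h(t)$ as an intermediate quantity and write
\begin{equation*}
  u(t_n) - U_h^n = \bigl(u(t_n) - u_h(t_n)\bigr) + \bigl(u_h(t_n) - U_h^n\bigr),
\end{equation*}
so that the first term is controlled by Theorem \ref{thm:error-semi} (the spatial discretization error) and the second by Theorems \ref{thm:error-nonsmooth} and \ref{thm:error-smooth} (the temporal discretization error).

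For part (a), with $v\in D(A)$ and $v_h = R_h v$: Theorem \ref{thm:error-semi}(a) gives $\|u(t_n)-u_h(t_n)\|_{L^2\II}\le ch^2\|Av\|_{L^2\II}$, and Theorem \ref{thm:error-smooth} gives $\|u_h(t_n)-U_h^n\|_{L^2\II}\le c\tau t_n^{\al-1}\|Av\|_{L^2\II}$ (this is where the hypotheses $\theta$ close to $\pi/2$ and $\delta<\pi/2\tau$ are tacitly in force, and where we use $A_hR_hv=P_hAv$ together with the $L^2$-stability of $P_h$). Adding the two bounds via the triangle inequality yields the claimed estimate $\|u(t_n)-U_h^n\|_{L^2\II}\le c(\tau t_n^{\al-1}+h^2)\|Av\|_{L^2\II}$.

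For part (b), with $v\in L^2\II$ and $v_h = P_h v$: Theorem \ref{thm:error-semi}(b) gives $\|u(t_n)-u_h(t_n)\|_{L^2\II}\le ch^2 t_n^{-\al}\|v\|_{L^2\II}$, while Theorem \ref{thm:error-nonsmooth} gives $\|u_h(t_n)-U_h^n\|_{L^2\II}\le c\tau t_n^{-1}\|v\|_{L^2\II}$. The triangle inequality then produces $\|u(t_n)-U_h^n\|_{L^2\II}\le c(\tau t_n^{-1}+h^2 t_n^{-\al})\|v\|_{L^2\II}$, which is exactly the assertion. I would note that the restriction $n\ge1$ is needed precisely because the temporal error bounds blow up as $t_n\to0$, so the estimate is vacuous at $t_0$.

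The only place where more than bookkeeping is required is making sure the hypotheses of Theorems \ref{thm:error-nonsmooth} and \ref{thm:error-smooth} on the contour parameters ($\theta$ near $\pi/2$, $\delta<\pi/(2\tau)$) are compatible with the choice $\delta=t_n^{-1}$ used in those proofs — but this is already handled inside the cited theorems, so here it is genuinely just a matter of invoking them. Thus the "main obstacle" is not in this theorem at all; all the analytic work (the kernel estimates in Lemmas \ref{lem:pre1}--\ref{lem:pre3} and \ref{lem:errK}--\ref{lem:errK2}) has been front-loaded into the preceding results, and the proof of Theorem \ref{thm:error_fully} is a one-line triangle-inequality argument in each case, as the sentence preceding the statement already anticipates.
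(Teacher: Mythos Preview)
Your proposal is correct and matches the paper's own approach exactly: the paper states that the estimates ``follow from Theorems \ref{thm:error-semi}, \ref{thm:error-nonsmooth} and \ref{thm:error-smooth} and the triangle inequality,'' which is precisely the splitting $u(t_n)-U_h^n=(u(t_n)-u_h(t_n))+(u_h(t_n)-U_h^n)$ you describe. The only minor quibble is that your opening sentence mentions ``a careful interpolation argument for the low-regularity case,'' but you (correctly) never actually use interpolation in part (b)---that phrase can be dropped.
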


\begin{remark}\label{rem:interp}
For $v\in D(A)$, we can also choose $v_h=P_h v$ by the stability
of the L1 scheme. Hence, by interpolation we deduce
\begin{equation*}
   \| u(t_n)-U_h^n \|_{L^2(\Omega)} \le c (\tau t_n^{-1+\al\sigma} +h^{2} t_n^{-\al(1-\sigma)}) \|A^\sigma v\|_{L^2\II},
     \quad 0 \le \sigma \le 1.
\end{equation*}
\end{remark}

\section{Time-space fractional differential problem}\label{sec:space-frac}
The convergence theory developed in Section \ref{sec:err} may be extended to more general sectorial operators $A$,
i.e., (a) The resolvent set $\rho(A)$ contains the sector $\left\{ z: \theta\leq |\arg z|\leq \pi\right\}$ for
some $\theta\in(0,\pi/4)$; (b) $\| (z I+A)^{-1}  \| \le M/|z|$ for $z \in \Sigma_{\pi-\theta}$ and some
constant $M$. The technical restriction $\theta\in(0,\pi/4)$ stems from Remark \ref{rem:sector}. This in particular
covers the Riemann-Liouville fractional derivative of order $\beta\in(3/2,2)$; see Lemma \ref{lem:riem-sectorial}
below. Specifically, we consider the following one-dimensional space-time fractional differential equation
\begin{alignat}{3}\label{eqn:fde-sp-frac}
   \Dal u- {\DDR0 \beta} u&= f,&&\quad \text{in  } \Omega=(0,1)&&\quad T \ge t > 0,\notag\\
   u&=0,&&\quad\text{on}\  \partial \Omega&&\quad T \ge t > 0,\\
    u(0)&=v,&&\quad\text{in  }\Omega,&&\notag
\end{alignat}
with $\al\in(0,1)$ and $\beta\in(3/2,2)$. Here $\DDR0 \beta$ with $n-1 < \beta < n$, $n\in \mathbb{N}$,
denotes the left-sided Riemann-Liouville fractional derivative $\DDR0\beta u$ of order $\beta$ defined
by \cite[pp. 70]{KilbasSrivastavaTrujillo:2006}:
\begin{equation}\label{Riemann}
  \DDR0\beta u =\frac{1}{\Gamma(n-\beta)}\frac {d^n} {d x^n} \int_0^x (x-s)^{n-\beta-1}u(s)ds.
\end{equation}
The right-sided version of Riemann-Liouville fractional derivative is defined analogously
\begin{equation*}
  \DDR1\beta u =\frac{(-1)^n}{\Gamma(n-\beta)}\frac {d^n} {d x^n} \int_x^1 (s-x)^{n-\beta-1}u(s)ds.
\end{equation*}
The model \eqref{eqn:fde-sp-frac} is often adopted to describe anomalous diffusion process
involving both long range interactions and history mechanism.

The variational formulation of \eqref{eqn:fde-sp-frac} is to find $u \in \Hd {\beta/2} \equiv H^{\beta/2}_0(\Omega)$ such that (see \cite{JinLazarovPasciak:2013a})
\begin{equation}\label{eqn:var-sp-frac}
   (\Dal u ,\fy) + A(u,\fy) = (f,\fy) \quad \forall \fy \in \Hd {\beta/2},
\end{equation}
with $u(0)=v$, where the sesquilinear form $A(\cdot,\cdot)$ is given by
\begin{equation*}
   A(\fy,\psi)=-\left({\DDR0{\beta/2}} \fy,\ \DDR1{\beta/2}\psi\right).
\end{equation*}
It is known (see \cite[Lemma 3.1]{ErvinRoop:2006} and \cite[Lemma 4.2]{JinLazarovPasciak:2013a}) that
the sesquilinear form $A(\cdot,\cdot)$ is coercive and bounded on the space $\Hd{\beta/2}$.
Then Riesz representation theorem implies that there exists a unique bounded linear
operator $\widetilde A: \Hd {\beta/2} \rightarrow H^{-\beta/2}\II$ such that
\begin{equation*}
   A(\fy,\psi)= \langle \widetilde A \fy, \psi \rangle ,\quad \forall \fy,\psi \in \Hd {\beta/2}.
\end{equation*}
Define $D(A)=\{ \psi \in \Hd {\beta/2}: \widetilde A \psi \in L^2\II\}$
and an operator $A : D(A)\rightarrow L^2\II$ by
\begin{equation}\label{eqn:A}
A(\fy,\psi)=(A\fy,\psi),\ \fy \in D(A),\, \psi \in \Hd {\beta/2}.
\end{equation}
We recall that the domain $D(A)$ consists of functions of the form ${\,_0\hspace{-0.3mm}I^\beta_x}
f - ({\,_0\hspace{-0.3mm}I^\beta_x} f)(1)x^{\beta-1}$, where $f\in L^2\II$ \cite{JinLazarovPasciak:2013a}. The
term $x^{\beta-1}\in\Hdi0{\beta-1+\delta}$, $\delta\in[1-\beta/2,1/2)$, appears because it is in the
kernel of the operator $\DDR0\beta$. The presence of the term $x^{\beta-1}$ indicates that the solution $u$
usually can only have limited regularity.

\begin{lemma}\label{lem:riem-sectorial}
For $\beta\in(3/2,2)$,
the resolvent set $\rho(A)$ contains the sector $\{z: \theta\leq |\arg(z)|\leq \pi\}$ for any $\theta\in((2-\beta)\pi/2,\pi/4)$.
\end{lemma}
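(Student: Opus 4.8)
The plan is to exhibit the resolvent $(zI+A)^{-1}$ explicitly by solving, for given $g\in L^2(\Omega)$, the two-point boundary value problem $-{\DDR0\beta} u + zu = g$ on $(0,1)$ with $u(0)=u(1)=0$, and then to extract the decay estimate $\|(zI+A)^{-1}\|\le M/|z|$ for $z$ in the stated sector. Since $D(A)$ is characterized (as recalled just above the lemma) as the range of ${\,_0 I^\beta_x}$ modulo the one-dimensional kernel spanned by $x^{\beta-1}$, applying ${\,_0 I^\beta_x}$ to the equation turns it into $u + z\,{\,_0 I^\beta_x}u = {\,_0 I^\beta_x}g + c\,x^{\beta-1}$, where $c$ is a constant determined by the boundary condition at $x=1$. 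One recognizes this as a Volterra-type fixed-point equation for $u$, whose solution can be written via a Mittag--Leffler-type kernel: the relevant integral operator has resolvent involving $x^{\beta-1}E_{\beta,\beta}(-zx^\beta)$, so that $u(x) = \int_0^x (x-s)^{\beta-1}E_{\beta,\beta}(-z(x-s)^\beta)g(s)\,ds + c\,x^{\beta-1}E_{\beta,\beta}(-zx^\beta)$, with $c$ fixed by forcing $u(1)=0$.

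From this representation, the key step is a careful estimate of the solid-angle region of $z$ for which (i) the constant $c$ is well-defined, i.e. the ``characteristic function'' $z\mapsto \int_0^1 (1-s)^{\beta-1}E_{\beta,\beta}(-z(1-s)^\beta)g(s)\,ds$'s denominator $E_{\beta,\beta}(-z)$-type quantity does not vanish, and (ii) the resulting map $g\mapsto u$ is bounded on $L^2$ with norm $O(1/|z|)$. Both hinge on the classical asymptotics of the Mittag--Leffler function $E_{\beta,\beta}(w)$: for $w$ in a sector around the negative real axis one has $|E_{\beta,\beta}(w)|\le c/|w|$, while the zeros of $E_{\beta,\beta}$ lie, for $\beta\in(3/2,2)$, in a sector about the positive real axis of half-angle approximately $\beta\pi/2$ away from $\pi$ — more precisely, $E_{\beta,\beta}(w)\ne 0$ whenever $|\arg w| > \beta\pi/2$. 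Translating $w=-zx^\beta$ back to $z$, non-vanishing and the $1/|z|$ bound both persist as long as $\pi - \beta\pi/2 < |\arg(-z)|$, equivalently $|\arg z| < \beta\pi/2 - \pi/2\cdot$\dots; after collecting the constraints one finds the admissible sector is exactly $\{z:\ |\arg z|\ge\theta\}$ for any $\theta$ with $(2-\beta)\pi/2 < \theta$, and the requirement $\theta<\pi/4$ is just the compatibility condition with Remark~\ref{rem:sector} quoted in the text (it forces $\beta>3/2$, consistent with the hypothesis). Feeding the Mittag--Leffler bound through the convolution in the representation of $u$, a Young/Schur-test argument on $(0,1)$ gives $\|u\|_{L^2}\le (M/|z|)\|g\|_{L^2}$, uniformly in the sector.

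The main obstacle is step (ii) combined with the boundary correction: one must show that the constant $c$, which equals (minus) the ``interior'' term evaluated at $x=1$ divided by $E_{\beta,\beta}(-z)$, together with its prefactor $x^{\beta-1}E_{\beta,\beta}(-zx^\beta)$, contributes only $O(1/|z|)\|g\|_{L^2}$ in $L^2$ — the delicacy is that $x^{\beta-1}$ is barely in $L^2$ (it is for $\beta>3/2$, which is where the hypothesis is used sharply) and that $|E_{\beta,\beta}(-z)|$ must be bounded below away from zero uniformly on rays in the sector, which is precisely the quantitative zero-free statement above. I would handle this by splitting $|z|$ large (use the $c/|w|$ asymptotics, which give both the lower bound on $|E_{\beta,\beta}(-z)|\gtrsim 1/|z|$ and the decay of the convolution) from $|z|$ bounded away from $0$ and $\infty$ (a compactness argument: $E_{\beta,\beta}$ is entire with only the known zeros, so it is bounded below on any compact subsector avoiding them), and the small-$|z|$ regime is irrelevant since the sector estimate is only needed for the resolvent bound with the chosen $\delta$. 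Alternatively, and perhaps more cleanly, one can invoke the abstract fact (used elsewhere for such $A$) that coercivity of the sesquilinear form $A(\cdot,\cdot)$ on $\widetilde H^{\beta/2}(\Omega)$ with a Gårding-type inequality already gives a resolvent bound in a sector of half-angle $\pi-\theta_A$ where $\theta_A$ is the ``numerical range angle'' of $A$, and then one only needs to identify $\theta_A=(2-\beta)\pi/2$ from the explicit spectral picture of $-{\DDR0\beta}$; I would present the Mittag--Leffler computation as the route since it pins down the sharp angle, and remark that the $\theta<\pi/4$ restriction is external, inherited from the discretization analysis.
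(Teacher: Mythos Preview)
Your primary route --- the explicit Green's function via the Mittag--Leffler kernel --- is \emph{not} what the paper does. The paper's proof is precisely the ``alternative'' you relegate to one sentence at the end: it verifies the coercivity estimate $\Re A(u,u)\ge c_0\|u\|_{\widetilde H^{\beta/2}}^2$ with $c_0=\cos((2-\beta)\pi/2)$ and the continuity bound $|A(u,v)|\le \|u\|_{\widetilde H^{\beta/2}}\|v\|_{\widetilde H^{\beta/2}}$ (both quoted from the earlier variational analysis of the Riemann--Liouville form), and then invokes an abstract numerical-range lemma from \cite{JinLazarovPasciakZhou:2014} to read off the sector. That is the entire argument; the angle $(2-\beta)\pi/2$ drops out directly from the ratio $c_0/c_1$, and the restriction $\theta<\pi/4$ is then just the observation that $(2-\beta)\pi/2<\pi/4$ once $\beta>3/2$.

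Your Mittag--Leffler approach, while in principle workable, is substantially harder and the sketch as written has a real gap. The step where you pass from ``$E_{\beta,\beta}(w)\neq 0$ for $|\arg w|>\beta\pi/2$'' to ``$\pi-\beta\pi/2<|\arg(-z)|$'' is inconsistent: since $w=-z x^\beta$ with $x>0$, one has $\arg w=\arg(-z)$, so the translated condition is $|\arg(-z)|>\beta\pi/2$, not $>\pi-\beta\pi/2$, and this does not collapse to the claimed sector without further argument. More seriously, the assertion about the zero-free region of $E_{\beta,\beta}$ is not a standard off-the-shelf fact for $\beta\in(3/2,2)$; the zeros of two-parameter Mittag--Leffler functions are delicate, and you would need a precise reference or a separate proof. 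You also correctly flag, but do not resolve, the boundary-correction estimate. In short: the abstract form-based argument you mention last is the one to lead with --- it is a few lines, needs no Mittag--Leffler asymptotics, and is exactly the paper's proof.
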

\begin{proof}
Let $ \widetilde u$ be the zero extension of $u$. Recall that for $s\in(1/2,1)$ and $\Omega=(0,1)$,
\begin{equation*}
  \|u \|_{\Hd{s}} = \| |\xi|^s \mathcal{F}(\widetilde u)(\xi)  \|_{L^2(\mathbb{R})},
\end{equation*}
where $\mathcal{F}(\widetilde u)$ is the Fourier transform of $\widetilde u$, is a consistent and well-defined norm
on $\Hd{s}$. Further, we note that for $u\in \Hd{\beta/2}$ (see \cite{ErvinRoop:2006} and \cite{JinLazarovPasciak:2013a})
\begin{equation*}
  \Re (A(u,u))\ge c_0 \| u \|_{\Hd{\beta/2}}^2 \quad \text{with} ~~c_0= \cos((2-\beta/2)\pi).
\end{equation*}
Further, we recall the fact that for $u\in C^\infty_0(\Omega)$ and $s\in(1/2,1)$
\begin{equation*}
\begin{split}
 \|  \DDR0 s u \|_{L^2\II}&= \|  \DDRI0 s \widetilde u \|_{L^2\II} \leq \|\DDRI 0 s \widetilde u\|_{L^2(\mathbb{R})} \\
   &= \|\mathcal{F}(\DDRI0 s \widetilde u)\|_{L^2(\mathbb{R})} = \|\mathcal{F}(\DDRI0 s \widetilde u)\|_{L^2(\mathbb{R})}\\
  &= \||\xi|^s\mathcal{F} (\widetilde u)(\xi)\|_{L^2(\mathbb{R})} = \|  u \|_{\Hd{s}}.
\end{split}
\end{equation*}
By the density of $C_0^\infty\II$ in $\Hd{s}$ for $s\in(1/2,1)$ we obtain for all $u\in \Hd{\beta/2}$
\begin{equation*}
  \|  \DDR0 {\beta/2} u \|_{L^2\II} \leq \|  u \|_{\Hd{\beta/2}}.
\end{equation*}
Likewise, the right sided case follows:
\begin{equation*}
  \|  \DDR1 {\beta/2} u \|_{L^2\II} \leq \|  u \|_{\Hd{\beta/2}}.
\end{equation*}
Thus for $u,v\in \Hd{\beta/2}$, there holds
\begin{equation*}
  |A(u,v)| \le c_1 \| u \|_{\Hd{\beta/2}}\| v \|_{\Hd{\beta/2}}  \quad \text{with} ~~c_1= 1.
\end{equation*}
Then by Lemma 2.1 of \cite{JinLazarovPasciakZhou:2014}, we conclude that the
resolvent set $\rho(A)$ contains the sector $\left\{ z: \theta \leq |\arg z|\leq \pi
\right\}$ for all $\theta\in((2-\beta/2)\pi,\pi/2)$. In particular, for $\beta>3/2$, we may choose $\theta\in((2-\beta/2)\pi,\pi/4)$.
\end{proof}

By Lemma \ref{lem:riem-sectorial} and Remark \ref{rem:sector}, we can apply the theory in Sections
\ref{sec:prelim} and \ref{sec:err} to derive a fully discrete scheme based on the L1 scheme in time and
the Galerkin finite element approximation in space.
First we partition the unit interval $\Omega$ into a uniform mesh with a mesh size $h=1/M$.
We then define $V_h$ to be the set of continuous functions in $V$ which are
linear when restricted to the subintervals $[x_i,x_{i+1}]$, $i=0,\ldots,M-1$.
Further, we define the discrete operator $A_h: V_h \rightarrow V_h$ by
\begin{equation*}
(A_h \fy, \chi) = A(\fy,\chi)\quad \forall \fy,\chi \in V_h.
\end{equation*}
The corresponding Ritz projection $R_h:\Hd{\beta/2} \rightarrow V_h$ is defined by
\begin{equation}\label{eqn:Ritz}
    A(R_h\psi,\chi)=A(\psi,\chi) \quad \forall \psi\in \Hd{\beta/2}, \ \chi\in V_h.
\end{equation}
Then the fully discrete scheme for problem \eqref{eqn:fde-sp-frac} based on the L1
approximation \eqref{eqn:$L^1$approx} reads: find $U_h^n$ for $n=1,2,\ldots,N$
\begin{equation}\label{eqn:fully-sp-frac}
    (I+\tau^\al A_h)U_h^n=  b_n U_h^0+ \sum_{j=1}^n(b_{j-1}-b_j)U_h^{n-j}+\tau^\al F_h^n,
\end{equation}
with $U_h^0=v_h$ and $F_h^n=P_hf(t_n)$.

Last, we state the error estimates for the fully discrete scheme \eqref{eqn:fully-sp-frac}.
These estimates follow from Theorems \ref{thm:error-nonsmooth} and \ref{thm:error-smooth}
and the error estimates for the semidiscrete Galerkin scheme, which can be proved using the
operator trick in \cite{JinLazarovPasciakZhou:2014}, and thus the proof is omitted.
\begin{theorem}\label{thm:error_fully-sp-frac}
Let $u$ and $U_h^n$ be the solutions of problems \eqref{eqn:fde-sp-frac} and \eqref{eqn:fully-sp-frac} with
$U_h^0= v_h$ and $f\equiv0$, respectively. Then for $\delta\in[1-\beta/2,1/2)$ the following estimates hold.
\begin{itemize}
  \item[(a)] If $v\in D(A)$ and $v_h=R_h v$, then for $n \ge 1$
  \begin{equation*}
   \| u(t_n)-U_h^n \|_{L^2(\Om)} \le c(\tau t_n^{\al-1} + h^{\beta-1+\delta})  \| A v \|_{L^2\II}.
  \end{equation*}
  \item[(b)] If $v\in L^2(\Omega)$ and $v_h=P_hv$, then for $n \ge 1$
  \begin{equation*}
   \| u(t_n)-U_h^n \|_{L^2(\Om)} \le c(\tau t_n^{-1} +  h^{\beta-1+\delta} t_n^{-\al})  \| v \|_{L^2\II}.
  \end{equation*}
\end{itemize}
\end{theorem}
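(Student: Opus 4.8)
The plan is to mimic exactly the argument that gave Theorem \ref{thm:error_fully}, namely to split the total error into a semidiscrete (spatial) contribution and a fully discrete (temporal) contribution via the triangle inequality,
\begin{equation*}
   \| u(t_n)-U_h^n \|_{L^2\II} \le \| u(t_n)-u_h(t_n) \|_{L^2\II} + \| u_h(t_n)-U_h^n \|_{L^2\II},
\end{equation*}
where $u_h$ solves the semidiscrete analogue of \eqref{eqn:fde-sp-frac} with the operator $A$ and its discrete counterpart $A_h$ built from the sesquilinear form \eqref{eqn:A}. The second term is handled verbatim by Theorems \ref{thm:error-nonsmooth} and \ref{thm:error-smooth}: their proofs only use the resolvent bound \eqref{eqn:resol}, the sector-preserving property of $\chi_1$ (Lemma \ref{lem:pre3}, via Remark \ref{rem:sector}), and the kernel estimates of Lemmas \ref{lem:errK} and \ref{lem:errK2}, none of which sees the concrete form of $A=-\Delta$. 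The only input needed is that $A$ is sectorial with a small enough angle, and Lemma \ref{lem:riem-sectorial} supplies precisely this: for $\beta\in(3/2,2)$ the resolvent set contains $\{z:\theta\le|\arg z|\le\pi\}$ with $\theta$ that can be taken less than $\pi/4$, which is the condition quoted at the start of Section \ref{sec:space-frac} and fits the requirement $\theta\to\pi/2$ needed for Remark \ref{rem:sector}. Hence the temporal parts are $c\tau t_n^{\al-1}\|Av\|_{L^2\II}$ and $c\tau t_n^{-1}\|v\|_{L^2\II}$ respectively, with no change.

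The spatial error $\| u(t_n)-u_h(t_n)\|_{L^2\II}$ is the genuinely new ingredient, and this is where I expect the only real work to lie. First I would record the representations $u(t)=\frac{1}{2\pi\mathrm i}\int_{\Gamma_{\theta,\delta}}e^{zt}(z^\al+A)^{-1}z^{\al-1}v\,dz$ and the analogous one for $u_h$ with $A_h$, so that $u-u_h=\frac{1}{2\pi\mathrm i}\int_{\Gamma_{\theta,\delta}}e^{zt}z^{\al-1}\big[(z^\al+A)^{-1}-(z^\al+A_h)^{-1}\big]v\,dz$. The resolvent difference is then controlled by the finite element approximation property for the stationary problem $(z^\al+A)w=g$: one needs $\|(z^\al+A)^{-1}g-(z^\al+A_h)^{-1}P_hg\|_{L^2\II}\le c\,h^{\beta-1+\delta}|z|^{-\al\sigma}\|A^{1-\sigma}g\|$-type bounds, which for the Riemann–Liouville operator were established in \cite{JinLazarovPasciak:2013a} (the $h^{\beta-1+\delta}$ rate is forced by the $x^{\beta-1}$ singularity noted in the domain description of $D(A)$, and $\delta\in[1-\beta/2,1/2)$ is exactly the regularity window for that term). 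Combining such a resolvent-difference estimate with the standard contour-integration bookkeeping — choosing $\delta=t_n^{-1}$, estimating $\int e^{zt}$ along the rays and the arc — yields $\|u(t_n)-u_h(t_n)\|_{L^2\II}\le c h^{\beta-1+\delta}\|Av\|_{L^2\II}$ for $v\in D(A)$ and $\le c h^{\beta-1+\delta}t_n^{-\al}\|v\|_{L^2\II}$ for $v\in L^2\II$; this is the "operator trick in \cite{JinLazarovPasciakZhou:2014}" referenced in the statement, and it runs parallel to Theorem \ref{thm:error-semi}. Adding the two contributions and absorbing the $L^2$-stability of $P_h$ and the identity $A_hR_h=P_hA$ gives (a) and (b).

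The main obstacle — which is why the paper omits the detailed proof — is establishing the nonsmooth-data semidiscrete estimate with the correct power $h^{\beta-1+\delta}$ uniformly in $z$ along the contour: one must track how the finite element error for $(z^\al+A)^{-1}$ degrades as $|z|\to\infty$, and show the loss is only of order $|z|^{-\al}$ (not worse), so that after the $t_n$-scaled contour integration only the singular factor $t_n^{-\al}$ survives. This requires duality/Nitsche-type arguments adapted to the non-symmetric, fractional-order form $A(\cdot,\cdot)$, together with the regularity pickup $\|A^{1/2}w\|\sim\|w\|_{\Hd{\beta/2}}$ and the limited elliptic regularity $D(A)\subset \Hdi0{\beta-1+\delta}$. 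Everything downstream — the temporal estimates and the triangle inequality — is then routine given the lemmas already proved.
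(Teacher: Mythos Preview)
Your proposal is correct and follows exactly the route the paper indicates: split the error via the triangle inequality into a semidiscrete spatial part (handled by the operator trick of \cite{JinLazarovPasciakZhou:2014}, giving the $h^{\beta-1+\delta}$ rates) and a temporal part (handled verbatim by Theorems \ref{thm:error-nonsmooth} and \ref{thm:error-smooth}, which apply because Lemma \ref{lem:riem-sectorial} supplies sectoriality with angle less than $\pi/4$, matching the constraint from Remark \ref{rem:sector}). The paper omits the proof for precisely the reason you identify --- the only nontrivial piece is the uniform-in-$z$ resolvent-difference estimate for the non-symmetric fractional form, and that is deferred to the cited reference.
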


\section{Numerical experiments and discussions}\label{sec:numer}
Now we present numerical results to verify the convergence theory in Sections \ref{sec:err} and \ref{sec:space-frac},
i.e., the $O(\tau)$ convergence rate. We consider the subdiffusion case \eqref{eqn:fde}
and time-space fractional case \eqref{eqn:fde-sp-frac} separately. For each model, we consider
the following two initial data:
\begin{itemize}
  \item[(a)] $\Omega=(0,1)$, and $v=\sin(2\pi x) \in H^2(\Omega)\cap H_0^1(\Omega)$.
  \item[(b)] $\Omega=(0,1)$, and $v=x^{-1/4}\in H^{{1/4}-\epsilon}(\Omega)$, with $\epsilon\in(0,1/4)$;
\end{itemize}
We measure the error $e^n = u(t_n)-U_h^n$ by the normalized errors $\| e^n \|_{L^2\II}/
\| v \|_{L^2\II}$. In the computations, we divide the unit interval $\Omega=(0,1)$ into $M$
equally spaced subintervals with a mesh size $h=1/M$. Likewise, we fix the time step size
$\tau$ at $\tau=t/N$, where $t$ is the time of interest. In this work, we only examine the temporal convergence rate, since
the space convergence rate has been examined earlier in \cite{JinLazarovZhou:2013,JinLazarovPasciakZhou:2014}.
To this end, we take a small mesh size $h=2^{-13}$, so that the spatial discretization error is negligible.

\subsection{Subdiffusion.}
The exact solution can be written explicitly as an infinite series involving
the Mittag-Leffler function $E_{\al,\beta}(z)$ defined by
\begin{equation*}
  E_{\alpha,\beta}(z) = \sum_{k=0}^\infty \frac{z^k}{\Gamma(\alpha k+\beta)};
\end{equation*}
see \cite{JinLazarovZhou:2013} for the details. The Mittag-Leffler function $E_{\alpha,\beta}(z)$ can be evaluated efficiently
by the algorithm developed in \cite{Seybold:2008}. The numerical results for cases (a) and (b) are shown
in Table \ref{tab:time-error}, where \texttt{rate} refers to the empirical convergence rate, and the number
in the bracket refers to the theoretical rate from Theorem \ref{thm:error_fully}. The results fully confirm the theoretical prediction: for
both smooth and nonsmooth data, the fully discrete solution $U_h^n$ converges at a rate $O(\tau)$, and the rate
is independent of the fractional order $\alpha$. Further, for fixed $t$, the error increases with the fractional order
$\alpha$. This might be attributed to the local decay behavior of the solution $u$: the larger is the fractional
order $\alpha$, the slower is the solution decay around $t=0$. According to Remark \ref{rem:interp}, we have
\begin{equation*}
  \| u(t_n)-U_h^n \|_{L^2(\Omega)} \le C (N^{-1} t_n^{\al\sigma} +h^{2} t_n^{-\al(1-\sigma)}) \|A^\sigma v\|_{L^2\II},
\quad \sigma\in[0,1].
\end{equation*}
Thus the temporal error deteriorates as the time $t_n\to0$ at a rate like $t_n^{\alpha}$ and $t_n^{\al/8
-\al\ep}$ for $Av\in L^2\II$ and $v\in D(A^{1/8-\epsilon})$, with $\epsilon\in(0,1/8)$, respectively.
In particular, for fixed $N$, the error behaves like $t_n^{1/2}$ and $t_n^{1/16}$ for cases
(a) and (b) with $\al=0.5$, respectively. This is clearly observed in Table \ref{tab:time-sing},
thereby showing the sharpness of the error estimates. Further, we observe that the L1
scheme fails to converge uniformly (in time) at a first order even though the initial
data $v$ in case (a) is very smooth, i.e., $v\in D(A^p)$ for any $p\ge0$, cf. Table
\ref{tab:time-sing}. This numerically confirms the observation in Remark \ref{rem:singularity}.

\begin{table}[htb!]
\caption{The $L^2$-norm of the error for the subdiffusion equation with initial data (a) and (b) at $t=0.1$ with $h=2^{-13}$, $\tau=1/N$.}
\label{tab:time-error}
\begin{center}
\vspace{-.3cm}{\setlength{\tabcolsep}{7pt}
     \begin{tabular}{|c|c|cccccc|c|}
     \hline
      $\alpha$ &  $N$  &$10$ &$20$ &$40$ & $80$ & $160$ &$320$ &rate \\
     \hline
     $0.1$ & (a)  &1.46e-4 &7.18e-5 &3.55e-5 &1.77e-5 &8.82e-6 &4.40e-6 &$\approx$ 1.01 (1.00)\\
             &  (b)  &3.95e-4 &1.93e-4 &9.57e-5 &4.76e-5 &2.38e-5 &1.19e-5 &$\approx$ 1.00 (1.00)\\
      \hline
     $0.5$ &  (a)  &1.22e-3 &5.89e-4 &2.88e-4 &1.43e-4 &7.08e-5 &3.52e-5 &$\approx$ 1.01 (1.00)\\
             &  (b)  &3.65e-3 &1.73e-3 &8.36e-4 &4.09e-4 &2.02e-4 &1.00e-4 &$\approx$ 1.02 (1.00)\\
      \hline
     $0.9$ &  (a)  &7.01e-3 &3.05e-3 &1.39e-3 &6.53e-4 &3.12e-4 &1.50e-4 &$\approx$ 1.07 (1.00)\\
             &  (b)  &1.54e-2 &7.67e-3 &3.79e-3 &1.87e-3 &9.23e-4 &4.55e-4 &$\approx$ 1.02 (1.00)\\
      \hline
     \end{tabular}}
\end{center}
\end{table}

\begin{table}[htb!]
\caption{The $L^2$-norm of the error for the subdiffusion equation with initial data (a) and (b) with
$\alpha=0.5$, $h=2^{-13}$ and $N=10$.}
\label{tab:time-sing}
\begin{center}
\vspace{-.3cm}{\setlength{\tabcolsep}{7pt}
     \begin{tabular}{|c|c|ccccc|c|c|}
     \hline
      $t$  & 1e-5 & 1e-6  & 1e-7  & 1e-8 & 1e-9 & 1e-10 & rate \\
     \hline
      (a) &2.94e-3 &1.05e-3 &3.45e-4 &1.11e-4 &3.51e-5 &1.11e-5 &$\approx$ 0.50 (0.50) \\
      \hline
      (b) &3.02e-3 &2.56e-3 &2.18e-3 &1.86e-3 &1.58e-3 &1.35e-3 &$\approx$  0.07 (0.06) \\
      \hline
     \end{tabular}}
\end{center}
\end{table}

\subsection{Time-space fractional problem.} Now we present numerical results for the space time
fractional problem. Since the exact solution is not available in closed form, we compute the reference
solution by the second-order backward difference scheme from \cite{JinLazarovZhou:2014a} on a much
finer mesh, i.e., $N=1000$. The numerical results for case (a) with different $\al$ and $\beta$ values
are presented in Table \ref{tab:time-error-a}. A first-order convergence is observed, and the
convergence rate is independent of the time- and space-fractional orders. Interestingly, the
observation is valid also for the case $\beta=5/4$, for which the theory in Section \ref{sec:space-frac} does not cover, awaiting further
study. For a fixed $\alpha$ value, the error decreases with the increase of the fractional order
$\beta$, which indicates that the solution decays faster as $\beta$ approaches two. This is also
consistent with the fact that the closer is the $\beta$ value to unity, the more singular is the solution,
and thus more challenging to approximate numerically, cf. \cite{JinLazarovPasciak:2013a}. For the
nonsmooth case (b), we are particularly interested in the case of small $t$. Thus we present the
numerical results for $t=0.1$, $t=0.01$ and $t=0.001$ in Table \ref{tab:time-error-b}. The
experiment shows the first order convergence is robust for nonsmooth data even if $t$ is close to
zero. Like before, for fixed $n$, let $t_n\to 0$, the error behaves like $t_n^{\al}$
for case (a), which is fully confirmed by the numerical results, cf. Table
\ref{tab:time-sing-spfrac}, indicating the sharpness of the estimate in Theorem \ref{thm:error_fully-sp-frac}.

\begin{table}[htb!]
\caption{The $L^2$-norm of the error for the time-space fractional problem with initial data (a) at
$t=0.1$ with $h=2^{-13}$, $\tau\to 0$.}
\label{tab:time-error-a}
\begin{center}
\vspace{-.3cm}{\setlength{\tabcolsep}{7pt}
     \begin{tabular}{|c|c|ccccc|c|}
     \hline
      $\alpha$ &  $\beta\backslash N$ &5 &$10$ &$20$ &$40$ & $80$ &rate \\
     \hline
               & $1.25$  &1.37e-3 &6.55e-4 &3.21e-4 &1.59e-4 &7.90e-5  &$\approx$ 1.01 (1.00)\\
     $0.1$  & $1.5$  &8.41e-4 &4.03e-4 &1.98e-4 &9.78e-5 &4.87e-5 &$\approx$ 1.01 (1.00)\\
               & $1.75$  &5.08e-4 &2.44e-4 &1.19e-4 &5.92e-5 &2.94e-5  &$\approx$ 1.01 (1.00)\\
      \hline
               & $1.25$  &1.52e-2 &6.69e-3 &3.12e-3 &1.50e-3 &7.32e-4 &$\approx$ 1.03 (1.00)\\
     $0.5$  & $1.5$  &8.22e-3 &3.70e-3 &1.75e-3 &8.49e-4 &4.17e-4  &$\approx$ 1.05 (1.00)\\
                & $1.75$  &4.69e-3 &2.14e-3 &1.02e-3 &4.97e-4 &2.45e-4  &$\approx$ 1.03 (1.00)\\
      \hline
               & $1.25$  &6.01e-2 &3.19e-2 &1.62e-2 &8.07e-3 &3.99e-4 &$\approx$ 1.01 (1.00)\\
     $0.9$ & $1.5$  &5.61e-2 &2.86e-2 &1.42e-2 &6.95e-3 &3.39e-3  &$\approx$ 1.03 (1.00)\\
               & $1.75$ &3.58e-2 &1.66e-2 &7.74e-3 &3.66e-3 & 1.75e-3   &$\approx$ 1.07 (1.00)\\
      \hline
     \end{tabular}}
\end{center}
\end{table}

\begin{table}[htb!]
\caption{The $L^2$-norm of the error for the time-space fractional problem with initial data (b) with
$\beta=1.5$, $h=2^{-13}$, $t=0.1$, $0.01$ and $0.001$.}
\label{tab:time-error-b}
\begin{center}
\vspace{-.3cm}{\setlength{\tabcolsep}{7pt}
     \begin{tabular}{|c|c|ccccc|c|}
     \hline
      $\alpha$ &  $t\backslash N$  &$5$ &$10$ &$20$ &$40$ & $80$ &rate \\
     \hline
               & $0.1$    &1.53e-3 &7.32e-4 &3.59e-4 &1.77e-4 &8.83e-5  &$\approx$ 1.01 (1.00)\\
     $0.1$     & $0.01$   &1.74e-3 &8.34e-4 &4.08e-4 &2.02e-4 &1.01e-4 &$\approx$ 1.01 (1.00)\\
               & $0.001$  &1.94e-3 &9.31e-4 &4.56e-4 &2.25e-4 &1.12e-4  &$\approx$ 1.01 (1.00)\\
      \hline
               & $0.1$    &1.39e-2 &6.36e-3 &3.01e-3 &1.45e-3 &7.11e-4 &$\approx$ 1.04 (1.00)\\
     $0.5$     & $0.01$   &1.22e-2 &5.86e-3 &2.85e-3 &1.40e-3 &6.89e-4  &$\approx$ 1.03 (1.00)\\
               & $0.001$  &8.02e-3 &3.83e-3 &1.86e-3 &9.12e-4 &4.50e-4  &$\approx$ 1.02 (1.00)\\
      \hline
               & $0.1$  &1.99e-2 &1.02e-2 &5.15e-3 &2.60e-3 &1.31e-3 &$\approx$ 1.00 (1.00)\\
     $0.9$ &     $0.01$ &1.21e-2 &6.10e-3 &3.05e-3 &1.52e-3 &7.53e-4  &$\approx$ 1.00 (1.00)\\
               & $0.001$&7.63e-3 &3.83e-3 &1.91e-3 &9.51e-4 &4.73e-4 &$\approx$ 1.00 (1.00)\\
      \hline
     \end{tabular}}
\end{center}
\end{table}

\begin{table}[htb!]
\caption{The $L^2$-norm of the error for the time-space fractional problem with initial data (a) and
(b), $\alpha=0.5$ and $\beta=1.5$, as $t\rightarrow 0$ with $h=2^{-13}$ and $N=5$.}
\label{tab:time-sing-spfrac}
\begin{center}
\vspace{-.3cm}{\setlength{\tabcolsep}{7pt}
     \begin{tabular}{|c|c|ccccc|c|c|}
     \hline
      $t$  & 1e-5 & 1e-6  & 1e-7  & 1e-8 & 1e-9 & 1e-10 & rate \\
     \hline
      (a) &2.60e-3 &8.90e-4 &2.96e-4 &9.71e-5 &3.17e-5 &1.03e-5 &$\approx$ 0.48 (0.50) \\
      \hline
      (b) &4.74e-3 &3.76e-3 &3.02e-3 &2.44e-3 &1.99e-3 &1.63e-3 &$\approx$  0.09 (---) \\
      \hline
     \end{tabular}}
\end{center}
\end{table}

\section{Conclusions}
In this paper we have revisited the popular L1 time-stepping scheme for discretizing the Caputo fractional derivative
of order $\alpha\in (0,1)$, arising in the modeling of subdiffusion, and rigorously established the first order convergence for both smooth and nonsmooth
initial data. This result complements existing convergence analysis, which assumes a $C^2$ regularity in time.
The extensive numerical experiments fully verify the sharpness of the estimates and robustness of the scheme. The convergence analysis is valid
for more general sectorial operators, and in particular, it covers also the space-time fractional
differential equations, for which we have also provided error estimates.

In view of the solution singularity for time $t$ close to zero, it is natural to consider the L1 scheme on
a nonuniform time mesh in order to arrive at a uniform first-order convergence. However, the generating function approach
used in our analysis does not work directly in this case, and it is an interesting open question to rigorously
establish error estimates directly in terms of the data regularity. A second interesting future research problem is the
convergence rate analysis of the scheme for the diffusion wave equation, which involves a Caputo fractional
derivative of order $\alpha\in (1,2)$ in time.

\section*{Acknowledgment}
The authors are grateful to the anonymous referee for his/her constructive comments, which have led to improved quality of the paper.
The research of B. Jin has been partially supported by NSF Grant DMS-1319052 and National Science Foundation of China No. 11471141, and that of R. Lazarov was supported in parts
by NSF Grant DMS-1016525 and also by Award No. KUS-C1-016-04, made by King Abdullah University of Science and Technology (KAUST). Z. Zhou was partially supported by NSF Grant DMS-1016525.

\bibliographystyle{abbrv}
\bibliography{frac}
\end{document}